\title{Randomized multi-class classification under system constraints: a unified approach via post-processing}
\author{%
  Evgenii Chzhen\\
  CNRS, Université Paris-Saclay\\
  \texttt{evgenii.chzhen@cnrs.fr} \\
  \And
  Mohamed Hebiri\\
  LAMA, Université Gustave Eiffel\\
  \texttt{mohamed.hebiri@univ-eiffel.fr}\\
  \And
  Gayane Taturyan\\
  IRT SystemX, Université Gustave Eiffel,\\
  Université Paul-Sabatier\\
  \texttt{gayane.taturyan@univ-eiffel.fr}
}
\begin{document}

\maketitle
\begin{abstract}
We study the problem of multi-class classification under system-level constraints expressible as linear functionals over randomized classifiers. We propose a post-processing approach that adjusts a given base classifier to satisfy general constraints without retraining. Our method formulates the problem as a linearly constrained stochastic program over randomized classifiers, and leverages entropic regularization and dual optimization techniques to construct a feasible solution. We provide finite-sample guarantees for the risk and constraint satisfaction for the final output of our algorithm under minimal assumptions. The framework accommodates a broad class of constraints, including fairness, abstention, and churn requirements.
\end{abstract}


\section{Introduction}

In many real-world settings, deploying a classifier involves more than just optimizing predictive accuracy. The predictions must often satisfy system-level constraints, such as fairness with respect to sensitive attributes, a limited rate of abstention, or consistency with previously deployed models.

A common strategy to enforce such constraints is to modify the training procedure itself—either through regularization or custom loss functions~\citep{Agarwal_Beygelzimer_Dubik_Langford_Wallach18,agarwal2019fair,charoenphakdee2021classification,oneto2020general,cao2022generalizing,cotter2019optimization}. However, these in-processing methods can be complex to implement, may require retraining from scratch, and often rely on strong assumptions about the underlying distribution or model architecture.

In this work, we take a different approach: we propose a post-processing method that transforms the predictions of an existing base classifier into a new randomized classifier that satisfies a collection of expectation-based constraints. This method can be applied in a black-box fashion, without requiring access to the internals of the original classifier, and is compatible with any existing estimation procedure.

Our framework is based on a linear programming formulation in which classifiers are represented as randomized prediction functions. The optimization objective is to minimize the expected loss under multiple expected constraint violations. To solve this problem, we rely on a dual formulation with entropic regularization, which leads to an explicit representation of the solution and enables the use of stochastic optimization techniques. We provide finite-sample guarantees for the resulting classifier, which hold under minimal assumptions on the data distribution. The combination of the above is one of the main distinctive feature of our approach compared to the previously available methodologies~\citep{celis2019classification,narasimhan2024consistent,narasimhan18a}.

The proposed method is flexible enough to handle a variety of constraint types, including fairness notions (such as demographic parity and equalized odds), rejection constraints, and churn control. It also naturally supports combinations of such constraints. Throughout the paper, we illustrate how different classification problems fit into our framework, and we provide theoretical guarantees that quantify constraint satisfaction and approximate risk minimization properties.

\paragraph{Contributions}
Our contribution is an end-to-end, theory-driven post-processing algorithm that is easy to implement, broadly applicable, and supported by statistical guarantees. It complements existing methods in the literature~\citep{celis2019classification,narasimhan2024consistent} and offers a practical alternative for constrained classification in complex environments.

\paragraph{Organization}
The paper is organized as follows. In Section~\ref{sec:setup}, we introduce the problem setup, presenting the general framework and motivating it through illustrative examples. Section~\ref{sec:methodology} details our methodology, emphasizing both its computational and statistical advantages. In Section~\ref{sec:algorithm}, we describe the proposed algorithm, which is designed to leverage an arbitrary black-box stochastic optimization procedure. We also instantiate it with \texttt{SGD3} of~\cite{allenzhu2021make} to provide a concrete understanding of the resulting performance bound as well as discuss an extension to set-valued classification. The proofs of the theoretical contributions, as well as a short numerical study are provided in the appendix.
\paragraph{Notation} Let us present the notation that is used throughout the paper. For a positive integer $K$, we write $[K]$ to denote $\{1, \ldots, K\}$. For every $\beta > 0, m \in \bbN$, and $\bw = (w_1, \ldots, w_m)^\top \in \bbR^m$, we denote by $\lse : \bbR^m \to \bbR$ and $\bsigma = (\sigma_1, \ldots, \sigma_m) : \bbR^m \to \bbR^m$
the log-sum-exp and the softmax functions respectively, defined as
\begin{align*}
    \lse (\bw) = \beta^{-1}\log\big(\sum_{j = 1}^m \exp(\beta w_j)\big) \quad \text{and} \quad \sigma_j(\bw) = {\exp(w_j)}/\big({\sum_{i = 1}^m \exp(w_i)}\big) \enspace.
\end{align*}
For any $a  \in \bbR$ and $\bw \in \bbR^m$  we set $(a)_+ = \max\{a , 0\}$ and $(\bw)_+ = ((w_1)_+, \ldots, (w_m)_+)^\top$. For $q\in [1,+\infty]$, the $\ell_q$-norm of a vector is denoted by $\norm{\cdot }_{q}$ and in particular, for $q=2$ we just write $\norm{\cdot}$ for the Euclidean norm.
The subordinate norm of an $m \times n$ matrix $\bA$ is defined as $\norm{\bA}_{p\rightarrow q} = \sup\left\{\norm{\bA\bw}_{q}:\norm{\bw}_p=1 \text{ and }\bw\in\bbR^n\right\}$, and we write $\| A \|_F = \norm{\bA}_{2\rightarrow 2}$ for the Frobenius norm of $A$.
Finally, the symbol $\independent $ stands for the statistical independence between two random variables.

\section{Problem setup}
\label{sec:setup}
In this work we consider a problem of multi-class classification. Let $\cX$ be the space of features and $[K]$ be the space of possible labels. 
Let $(\bX, Y) \in \cX \times [K]$ be the feature, label pair, following some unknown distribution $\Prob$.
We let $\cA$ stand for a finite set of possible values for prediction.
For example, in the simplest case, $\cA = [K]$ and other examples will be given in the next section.
To quantify a loss of issuing a given prediction for a given feature $\bx \in \cX$, we introduce loss function $\ell : \cX \times \cA \mapsto \bbR$ as well as $M \geq 0$ different cost functions $c_j : \cX \times \cA \mapsto \bbR$, which are \emph{not} necessarily known beforehand.
\begin{definition}[Randomized prediction]
    A randomized prediction function is a Markov kernel $\pi : \cX \mapsto \Delta(\cA)$ so that for any $\bx \in \cX$, $\pi(\cdot \mid \bx)$ is a probability distribution on $\cA$.
For any such prediction $\pi$, introduce $\Prob^\pi$ as a distribution of $(\bX, Y, \hY^\pi)$ so that
\[(\bX, Y) \sim \Prob,
\qquad 
\hY^\pi \mid \bX \sim \pi(\cdot \mid \bX)
\quad \text{ and }\quad
(\hY^\pi \independent Y) \mid \bX\,.
\]
We also write $\Exp^\pi[\cdot]$ to denote the expectation with respect to $\Prob^\pi$.

\end{definition}

Based on the previous definition, $\hY^{\pi}$ should be interpreted as the prediction.
In this work, we study, from a statistical point of view, the following optimization problem
\begin{align}
    \label{eq:optimal}
    \pi^\star \in \argmin_{\pi} \enscond{\Exp^\pi[\ell(\bX, \hY^\pi)]}{\Exp^\pi[c_j(\bX, \hY^\pi)] \leq 0 \quad \forall j \in [M]}\enspace.
\end{align}

\paragraph{On the risk, constraints, and role of labels.} Looking at~\eqref{eq:optimal}, it is evident that the labels $Y$ do not explicitly enter into the formulation of the problem, which is counter-intuitive. We note however, that as we do not assume that neither the loss, nor the constraint functions are known, the dependency on $Y$ can and will be hidden inside the two. For example, imagine we are given a loss function $L : \cY \times \cA \mapsto \bbR_+$, quantifying the (classical) loss of prediction $\hat{y}$ for the true value $y$. Then, setting
\begin{align*}
    \ell(\bx, \hat{y}) \eqdef \Exp\left[L(Y, \hat{y}) \mid \bX = \bx\right]\enspace,
\end{align*}
we would recover the setup that we consider in this paper. To give a concrete example, set $L(y, \hat{y}) = \ind{y \neq \hat{y}}$, then
\begin{align*}
    \ell(\bx, \hat{y}) = \sum_{y \in \cY} p_y(\bx)\ind{\hat{y} \neq y}\enspace,
\end{align*}
with $p_y(\bX) \eqdef \Prob(Y = y \mid \bX)$. While the loss $L$ was known, the loss $\ell$ is not and it should be estimated from data. In a typical problem loss and/or constraint functions will depend on the conditional distributions of labels (or other relevant quantities). Once these distributions are estimated, one can plug them into the corresponding loss and constraint functions.

Returning to the previous example, after constructing an estimator $\hat{p}_y(\cdot)$ for $p_{y}(\cdot)$---which can be done in various ways---one can define an estimated loss as
\[
\hat{\ell}(\bx, \hat{y}) = \sum_{y \in \cY} \hat{p}_y(\bx) \ind{\hat{y} \neq y}\enspace.
\]
Our theoretical guarantees will explicitly depend on the quality of the estimated loss and constraint function, as well as they will exhibit explicit cost for post-processing. This is precisely in this sense that we position our contribution within the post-processing approaches.

\subsection{Examples of problems}
\label{sub:examples_of_problems}
Let us provide several examples of concrete multi-class classification problems that fit into the framework described in the previous section. In the examples below we will stick to the simplest case of $\cA = [K]$ and write $\hY^{\pi}$ to denote the prediction.

\paragraph{Standard multi-class classification.} In the standard setup, \(\cA = [K]\), there are no costs, and the goal is to find a classifier that minimizes the misclassification risk:
\begin{align*}
\min_{\pi} \Prob^\pi(Y \neq \hat{Y}^\pi)\enspace.
\end{align*}
A direct computation shows that this objective can be expressed within our framework as follows:
\begin{align*}
\Prob^\pi(Y \neq \hat{Y}^\pi) = \Exp\left[\sum_{y \in \cY}(1 - p_k(\bX))\pi(y \mid \bX)\right] = \Exp^\pi[\ell(\bX, \hat{Y}^\pi)]\enspace,
\end{align*}
where \(\ell(\bx, \hat{y}) = 1 - p_{\hat{y}}(\bx)\).

Multi-class classification, including the binary case, has been extensively studied through various statistical approaches.

\paragraph{Classification with reject option.} Classification with reject option, first introduced by \citet{Chow57}, is a variation of classical classification problem, where the model is allowed to abstain from making prediction if it is uncertain about said prediction. To put it in our framework, we set $\cA = [K] \cup \{r\}$, where the prediction $r$ is interpreted as an abstention from prediction. Depending on the concrete application, we can consider two frameworks:
\begin{enumerate}
    \item Controlled rejection:
    \begin{align*}
        \min_{\pi}\enscond{\Prob^\pi(Y \neq \hat{Y}^{\pi}, \hat{Y}^{\pi} \neq r)}{\Prob^{\pi}(\hat{Y}^{\pi} = r) \leq \alpha}
    \end{align*}
    for some $\alpha \in (0, 1)$. In this framework, one targets minimization of misclassification when the predictor is not rejecting, while controlling the rate of rejection by some fixed parameter $\alpha$.

    \item Controlled error:
    \begin{align*}
        \min_{\pi}\enscond{\Prob^{\pi}(\hat{Y}^{\pi} = r)}{\Prob^\pi(Y \neq \hat{Y}^{\pi}, \hat{Y}^{\pi} \neq r) \leq \delta}\enspace,
    \end{align*}
    for some $\delta \in (0, 1)$. This framework is in some sense dual of the previous one. Instead of controlling the rejection rate, one fixes a given desired accuracy by $\delta$ and then seeks for a prediction function that minimizes the probability of rejection among such predictions.
\end{enumerate}
To put both frameworks in our setup, it suffices to notice that
\begin{align*}
    &\Prob^\pi(Y \neq \hat{Y}^{\pi}, \hat{Y}^{\pi} \neq r) = \Exp\left[\sum_{y \in \cY}(1 - p_{y}(\bX))\pi(y \mid \bX)\right] = \Exp^\pi[\ell(\bX, \hat{Y}^\pi)]\\
    &\Prob^{\pi}(\hat{Y}^\pi = r) = \Exp[\pi(r \mid \bX)] = \Exp[c(\bX, \hat{Y}^\pi)]\enspace,
\end{align*}
where again $\ell(\bx, \hat{y}) = 1 - p_{\hat{y}}(\bx)$ and $c(\bx, \hat{y}) = \ind{\hat{y} = r}$. Thus, either of the two constraint frameworks perfectly fits the setup of Eq.~\eqref{eq:optimal}. Indeed, it suffices to inverse the role of the loss and the constraints.

A statistical analyses of a penalized version of the problem was considered by~\cite{herbei2006classification,bartlett2008classification,yuan2010classification,wegkamp2007lasso}. A general statistical analysis of the first framework was done by~\cite{denis2020consistency}, while the second framework was studied in details by~\cite{lei2014}.

\paragraph{Classification under demographic parity constraint.}
In the literature of algorithmic fairness, it is often assumed that the features $\bX$ are composed of nominally unprotected attributes $\bZ$ and nominally protected attributes $S$. One possible goal is to learn a classifier whose prediction is independent from the sensitive attribute $S$.
More specifically, we assume that $\bX = (\bZ, S) \in \cZ \times \cS$ with $|\cS| < \infty$.
We let $\bW$ stand either for $\bZ$ or for $(\bZ, S)$ depending on the availability of $S$ at the inference time.

A typical objective in this literature is the following optimization problem
\begin{align*}
    \min_{\pi}\enscond{\Prob^\pi(Y \neq \hat{Y}^\pi)}{|\Prob^\pi(\hat{Y}^\pi = y \mid S = s) - \Prob^\pi(\hat{Y}^\pi = y)| \leq \varepsilon_s \,\,\forall (s, y) \in \cS \times \cY}\enspace.
\end{align*}
Intuitively, in the above problem, one looks for the most accurate classifier which is (approximately) independent from the sensitive attribute $S$.
Define
\begin{align*}
    p_y(\bw) \eqdef \Prob(Y = y \mid \bW = \bw) \quad \text{and} \quad \tau_s(\bw) \eqdef \Prob(S = s \mid \bW = \bw)\enspace.
\end{align*}
First of all, we again, as in the previous examples, notice that
\begin{align*}
    \Prob^\pi(Y \neq \hat{Y}^\pi) = \Exp^\pi[\ell(\bW, \hat{Y}^\pi)]\enspace,
\end{align*}
for $\ell(\bw, \hat{y}) = 1 - p_{\hat{y}}(\bw)$.
Secondly, we observe that the constraints in the above optimization problem are equivalent to the following $M = 2\cdot|\cS|\cdot|\cY|$ cost functions
\begin{align*}
    c_{(s, y)}^+(\bx, \hat{y}) &= \left(\frac{\tau_s(\bw)}{\Prob(S = s)} - 1\right)\ind{\hat{y} = y} - \varepsilon_s\enspace,\\
    c_{(s, y)}^-(\bx, \hat{y}) &= \left(1 - \frac{\tau_s(\bw)}{\Prob(S = s)}\right)\ind{\hat{y} = y} - \varepsilon_s\enspace.
\end{align*}
The combination of the two shows that this problem is well suited for the considered framework. Furthermore, it is important to point out that in the setup of $\bw = (\bz, s)$, one has $\tau_{s'}(\bz, s) = \ind{s = s'}$. Thus, in this setup the constraint function is known beforehand up to, potentially unknown, but easy to estimate $\Prob(S = s)$ for $s \in \cS$.

\paragraph{Classification under equal odds constraints.}
Still in the context of fairness from the previous example and the same notation, we consider
\begin{align*}
    \min_{\pi}\enscond{\Prob^\pi(Y \neq \hat{Y}^\pi)}{|\Prob^\pi(\hat{Y}^\pi = y \mid (S, Y) = (s, y')) - \Prob^\pi(\hat{Y}^\pi = y \mid Y = y')| \leq \varepsilon_{(s, y')} \,\,\forall (s, y, y')}\enspace.
\end{align*}
Intuitively, in the above problem, one looks for the most accurate classifier which is (approximately) independent from the sensitive attribute $S$, conditionally on the true outcome $Y$.
The risk is handled as in all the previous examples, while the constraints now correspond to the following cost $2\cdot|\cS| \cdot |\cY|^2$ functions
\begin{align*}
    c_{(y, s, y')}^+(\bw, \hat{y}) &= \left(\frac{\Prob((S, Y) = (s, y') \mid \bW = \bw)}{\Prob((S, Y) = (s, y'))} - \frac{p_{y'}(\bw)}{\Prob(Y = y')}\right)\ind{\hat{y} = y} - \varepsilon_{(s, y')}\enspace,\\
    c_{(y, s, y')}^-(\bw, \hat{y}) &= \left(\frac{p_{y'}(\bw)}{\Prob(Y = y')} - \frac{\Prob((S, Y) = (s, y') \mid \bW = \bw)}{\Prob((S, Y) = (s, y'))}\right)\ind{\hat{y} = y} - \varepsilon_{(s, y')} \enspace.
\end{align*}

Demographic parity and equal odds constraints for both \(\bW = \bZ\) and \(\bW = (\bZ, S)\) have been explored in numerous studies using a variety of approaches. Broadly, two main types of algorithms have been investigated: in-processing methods~\citep{Agarwal_Beygelzimer_Dubik_Langford_Wallach18,oneto2020general} and post-processing methods~\citep{schreuder2021,pmlr-v206-gaucher23a,xian2023fair,denis2021,zeng2022fair}. Our approach falls within the post-processing paradigm, introducing minimal data-driven modifications to a given estimator to ensure compliance with fairness constraints.

\paragraph{Classification under churn constraint.}
Classification with churn constraints aims at classifying instances while limiting the number of changes (or “churn”) between the new and previous classifications~\citep{cotter2019optimization,narasimhan2024consistent,narasimhan18a,narasimhan2020pairwise}. This is particularly relevant in settings where frequent changes in predictions can have undesirable consequences. More specifically, in the setting of the standard $0/1$-risk, one wishes to solve
\begin{align*}
    \min_{\pi}\enscond{\Prob^{\pi}(Y \neq \hat{Y}^\pi)}{\Prob^\pi(\hat{Y}^\pi \neq g(\bX)) \leq \alpha}\enspace,
\end{align*}
where $g$ is some fixed classifier and $\alpha \in (0, 1)$ quantifies the desired level of disagreement with $g$. Again, using the same expression for the risk as before and noticing that
\begin{align*}
    \Prob^\pi(\hat{Y}^\pi \neq g(\bX)) = \Exp^\pi\left[\indin{\hat{Y}^\pi \neq g(\bX)}\right]\enspace,
\end{align*}
we are able to accommodate this example in our framework.

\paragraph{Combinations of the above constraint.} In a more realistic scenario, one may need to enforce various constraints. For instance, as discussed in~\citep{schreuder2021}, a practitioner might seek to satisfy a demographic parity constraint while maintaining a controlled rejection rate. Any such combination—provided feasibility is addressed—can be seamlessly integrated into our framework through appropriate choices of the prediction set \(\cA\), loss function $\ell$, and constraints $c_j$.

\subsection{Linear program representation}
We return to the general framework.
It is no surprise that by considering the randomized classifiers in~\eqref{eq:optimal}, the resulting problem becomes that of linear programming. In particular, both risk $\risk(\pi)$ and costs $\cC_j(\pi)$ can be expressed as
\begin{align}
    \risk(\pi)
    &= \Exp_\bX \Big[\sum_{a\in \cA}\ell(\bX, a)\pi(a \mid \bX)\Big] \quad \text{and} \label{eq:risk} \\
    \cC_j(\pi)
    &= \Exp_\bX \Big[\sum_{a\in \cA}c_j(\bX, a)\pi(a \mid \bX)\Big] \quad \text{for} \quad j \in [M]\enspace. \label{eq:constraint}
\end{align}
For an $\bx \in \bbR^d$ we introduce a vector $\bL(\bx) \eqdef (\ell(\bx,a))_{a \in \cA}\in \bbR^{|\cA|}$, and a matrix $\bC(\bx) \in \bbR^{M \times |\cA|}$ defined as
\begin{align*}
    (\bC(\bx))_{ja} = c_{j}(\bx, a)\enspace,
\end{align*}
for $j \in [M]$ and $a \in \cA$. With that notation at hand, the original problem in~\eqref{eq:optimal} can be written as\footnote{When the symbols $\leq$ or $<$ are applied to vectors, the inequalities are to be understood component-wise.}
\begin{align}
    \label{eq:optimal_LP}
    \min\enscond{\Exp\scalar{\bL(\bX)}{\bpi(\bX)}}{\Exp\left[\bC(\bX)\bpi(\bX)\right] \leq 0}\enspace,
\end{align}
where we denoted by $\bpi(\bx)$ the vector in $\bbR^{|\cA|}$ composed of $\pi(a \mid \bx)$ for $a \in \cA$.

Our approach will be strongly dependent on some strong duality arguments for the above linear program. More concretely, our analysis will reply on the boundedness of the optimal dual variables associated to~\eqref{eq:optimal_LP}. Thus, we need to introduce some constraint qualifications conditions. One such condition that is also rather simple to use is Slater's condition, which we put as an assumption on~\eqref{eq:optimal_LP}.
\begin{assumption}[{Slater's condition}]
    \label{ass:slater}
    There exists $\bpi$ \st $\Exp\left[\bC(\bX)\bpi(\bX)\right] < 0 $.
\end{assumption}

Slater's condition allows to prove the following generalized version of Neyman-Pearson lemma.
\begin{lemma}[Generalized Neyman-Pearson lemma]
    \label{lem:NP}
    Let Assumption~\ref{ass:slater} be satisfied and assume that
    \begin{align*}
        \Exp\|\bL(\bX)\| < \infty,\qquad \Exp\|\bC(\bX)\|_{F} < \infty\enspace.
    \end{align*}
    Then, there exists a $\bpi^\star$ solution of~\eqref{eq:optimal_LP} and $\blambda^\star \in \bbR^{M}_+$ such that
    \begin{align*}
    \begin{cases}
        &\displaystyle\blambda^\star \in \argmax_{\blambda \geq 0} \Exp[\min_{a \in \cA}(\bL(\bX) + \bC^\top(\bX)\blambda)_a]\\
        &\displaystyle\bpi^\star \in \argmin_{\bpi} \Exp[\scalar{\bL(\bX) + \bC^\top(\bX)\blambda^\star}{\bpi(\bX)}]\enspace
    \end{cases}
    \quad\text{and}\quad
    \begin{cases}
        \scalar{\blambda^\star}{\Exp[\bC(\bX)\bpi^\star(\bX)]} = 0\\
        \Exp[\bC(\bX)\bpi^\star(\bX)] \leq 0
    \end{cases}\,.
    \end{align*}
    Moreover, the support of the optimal randomized classifier satisfies
    \begin{align*}
        \mathrm{supp}(\pi^*(\cdot \mid \bx)) = \argmin_{a \in \cA}\ens{\ell(\bx, a) + \sum_{j = 1}^M c_j(\bx, a)\lambda^\star_j} \qquad \text{a.e.-}\Prob_{\bX}\,.
    \end{align*}
\end{lemma}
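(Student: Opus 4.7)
The plan is to apply Lagrangian duality to the infinite-dimensional linear program~\eqref{eq:optimal_LP}. Writing the Lagrangian as $\mathcal{L}(\bpi, \blambda) \eqdef \Exp[\scalar{\bL(\bX) + \bC^\top(\bX)\blambda}{\bpi(\bX)}]$ for $\blambda \in \bbR^M_+$, the steps are: (i) derive the dual function via pointwise minimization, (ii) use Slater's condition to obtain strong duality together with a bounded dual maximizer $\blambda^\star$, (iii) produce a primal minimizer $\bpi^\star$ and extract the KKT conditions, and (iv) read off the support characterization.

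\textbf{Dual function.} For fixed $\blambda \geq 0$, $\mathcal{L}(\bpi, \blambda)$ is linear in $\bpi$ and its infimum decouples: at each $\bx$ one minimizes $\scalar{\bL(\bx) + \bC^\top(\bx)\blambda}{\bpi(\bx)}$ over $\bpi(\bx) \in \Delta(\cA)$, which equals $\min_{a \in \cA}(\bL(\bx) + \bC^\top(\bx)\blambda)_a$ and is attained by any Dirac on the (nonempty, finite) argmin set, for which a measurable selector exists because $\cA$ is finite. Hence $g(\blambda) \eqdef \inf_{\bpi} \mathcal{L}(\bpi, \blambda) = \Exp[\min_{a \in \cA}(\bL(\bX) + \bC^\top(\bX)\blambda)_a]$ is concave (an infimum of affine functions of $\blambda$) and finite by the integrability of $\bL$ and $\bC$, and weak duality $g(\blambda) \leq \Exp[\scalar{\bL}{\bpi}]$ for every primal-feasible $\bpi$ is immediate.

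\textbf{Strong duality, dual and primal existence.} Assumption~\ref{ass:slater} yields a kernel $\bpi_0$ with $\Exp[\bC(\bX)\bpi_0(\bX)] \leq -\varepsilon \mathbf{1}$ for some $\varepsilon > 0$; plugging $\bpi_0$ into the Lagrangian gives $g(\blambda) \leq \mathcal{L}(\bpi_0, \blambda) \leq \Exp[\scalar{\bL}{\bpi_0}] - \varepsilon \norm{\blambda}_1$ for all $\blambda \geq 0$, so the super-level sets of $g$ are bounded and upper semicontinuity produces a maximizer $\blambda^\star \geq 0$. Strong duality $\max_{\blambda \geq 0} g(\blambda) = \inf_{\bpi~\text{feasible}} \Exp[\scalar{\bL}{\bpi}]$ is the standard consequence of Slater's condition applied to the convex value function $v(\bu) \eqdef \inf\enscond{\Exp[\scalar{\bL}{\bpi}]}{\Exp[\bC\bpi] \leq \bu}$, which is finite in a neighborhood of the origin and hence subdifferentiable there, with $-\partial v(0)$ equal to the set of dual optima. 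Viewing kernels as elements of $L^\infty(\Prob_{\bX}; \bbR^{|\cA|})$ constrained to the simplex, the feasible set is weak-$*$ closed (since $\bC \in L^1$) and the linear objective is weak-$*$ continuous (since $\bL \in L^1$), so Banach--Alaoglu produces a primal minimizer $\bpi^\star$. Strong duality then delivers
\begin{align*}
\Exp[\scalar{\bL}{\bpi^\star}] \;=\; g(\blambda^\star) \;\leq\; \mathcal{L}(\bpi^\star, \blambda^\star) \;=\; \Exp[\scalar{\bL}{\bpi^\star}] + \scalar{\blambda^\star}{\Exp[\bC\bpi^\star]}\enspace,
\end{align*}
and since $\blambda^\star \geq 0$ and $\Exp[\bC\bpi^\star] \leq 0$, every inequality is in fact an equality. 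This yields complementary slackness $\scalar{\blambda^\star}{\Exp[\bC\bpi^\star]} = 0$ together with $\bpi^\star \in \argmin_{\bpi} \mathcal{L}(\bpi, \blambda^\star)$.

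\textbf{Support characterization and main obstacle.} Because $\bpi^\star$ minimizes $\mathcal{L}(\cdot, \blambda^\star)$ and that objective decouples into pointwise inner products over $\Delta(\cA)$, placing mass of $\pi^\star(\cdot \mid \bx)$ on any $a \notin \argmin_{a' \in \cA}(\bL(\bx) + \bC^\top(\bx)\blambda^\star)_{a'}$ on a $\Prob_{\bX}$-positive set would strictly decrease $\mathcal{L}$ after redistribution to the pointwise argmin; thus $\mathrm{supp}(\pi^\star(\cdot \mid \bx))$ is contained in the pointwise argmin almost everywhere. The main technical obstacle is justifying strong duality in this infinite-dimensional setting; once the Slater bound on $\norm{\blambda^\star}_1$ is in hand and the usual convex-programming argument for $\max g = v(0)$ is invoked, the remaining steps (primal existence via weak-$*$ compactness, KKT, and the support characterization) are essentially mechanical.
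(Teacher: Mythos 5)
Your proposal is correct and follows the same overall route as the paper: establish primal attainment by viewing kernels in $L^\infty(\Prob_{\bX};\bbR^{|\cA|})$ and using weak-$*$ compactness together with the weak-$*$ continuity of the objective granted by $\bL\in L^1$, then invoke Lagrangian duality under Slater's condition. The only difference is that where the paper delegates the duality step to a citation of Luenberger, you spell it out via the perturbation function $v(\bu)$ and the coercivity bound $g(\blambda)\leq \Exp[\scalar{\bL}{\bpi_0}]-\varepsilon\norm{\blambda}_1$; this is a legitimate and self-contained substitute, and the derivation of complementary slackness from the equality chain is correct. One point to flag: for the support claim you only establish the inclusion $\mathrm{supp}(\pi^\star(\cdot\mid\bx))\subseteq \argmin_{a\in\cA}\{\ell(\bx,a)+\sum_j c_j(\bx,a)\lambda_j^\star\}$, whereas the lemma asserts equality. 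The reverse inclusion does not follow from optimality of the Lagrangian alone---any distribution supported on the pointwise argmin minimizes $\mathcal{L}(\cdot,\blambda^\star)$, but enlarging the support to the full argmin set can destroy primal feasibility and complementary slackness when ties occur (this is precisely the role of randomization in the classical Neyman--Pearson setting). So your proof, like the paper's, really only delivers the inclusion; you are right to state only what you can prove, but you should note explicitly that the equality as written is not obtained.
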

\begin{proof}
    First let us show that the infimum is achieved. We note that
    \begin{align*}
        \inf\enscond{\Exp\scalar{\bL(\bX)}{\bpi(\bX)} \eqdef H(\bpi)}{\Exp\left[\bC(\bX)\bpi(\bX)\right] \leq 0} \geq 0\enspace.
    \end{align*}
    For completeness, we define spaces on which we work: we assume that $\Prob_{\bX}$ is a $\sigma$-finite probability measure and define
    \begin{align*}
        L^{1}(\cX,\Prob_{\bX};\mathbb{R}^{|\cA|})
      &\eqdef \Bigl\{ f : \cX \to \mathbb{R}^{|\cA|} \; \;:\;
                  \Exp \|f(\bX)\| < \infty \Bigr\}\\
        L^{\infty}(\cX,\Prob_{\bX};\mathbb{R}^{|\cA|})
      &\eqdef \Bigl\{ f : \cX \to \mathbb{R}^{|\cA|} \; \;:\;
                  {\displaystyle\operatorname*{ess\,sup}_{\bx \in \cX}}\|f(\bx)\| < \infty \Bigr\} \enspace.
    \end{align*}
    Taking minimizing sequence $\bpi_n \in L^{\infty}(\cX,\mu;\mathbb{R}^{|\cA|})$ for the above problem and invoking Banach-Alaoglu, we extract a weak*-convergent subsequence. To conclude we note that $\bpi \mapsto \Exp\scalar{\bL(\bX)}{\bpi(\bX)}$ is a weak*-continuous functional by our assumption on $\bL \in L^{1}(\cX,\mu;\mathbb{R}^{|\cA|})$. Thus, the infimum is achieved by Weierstrass theorem.

    The claim follows from the Lagrange duality that can be obtained for example from~\cite[Theorem 1, p. 217 and Corollary 1, p. 219]{luenberger1997optimization}.
\end{proof}
The goal of this work is to develop an algorithmic approach to the above lemma, enabling finite-sample guarantees in a black-box manner.

In the case of testing two simple hypotheses, it is relatively straightforward to show that these conditions lead to the well-known likelihood ratio test. This follows from the fact that the underlying system of equations can be solved explicitly. However, when a closed-form solution is unavailable, alternative approaches are required—one of which is developed in this work.

We also note that~\cite[Theorem 3.2 from][]{celis2019classification} is very similar to Lemma~\ref{lem:NP}, however the former is slightly problematic from the theoretical perspective. The issue is discussed in Section~\ref{sub:related_works}, where a detailed comparison is provided.

\section{Proposed methodology}
\label{sec:methodology}
To address problem~\eqref{eq:optimal}, we build upon the approach proposed in~\citep{NEURIPS2024_d5c3ecf3}. Our high-level strategy is grounded in convex optimization, drawing inspiration from Nesterov’s smoothing technique~\citep{nesterov2005}, as well as from the optimal transport literature, where entropic regularization is employed to enhance numerical stability~\citep{peyre2019computational}. Our objective is to formulate a smoothed dual version of problem~\eqref{eq:optimal} and, by leveraging the primal-dual relationship, demonstrate that an approximate minimizer of the smoothed dual leads to a statistically sound randomized classifier.
To this end, we introduce the entropic regularized risk \[\risk_{\beta}(\pi) = \risk(\pi) + \frac{1}{\beta}\Exp\bigg[\underbrace{\sum_{a \in \cA} \pi(a \mid \bX)\log(\pi(a \mid \bX))}_{\eqdef \Psi(\bpi(\cdot \mid \bX))}\bigg]\enspace.\]
Instead of \eqref{eq:optimal}, we consider the following problem\footnote{To avoid clutter, we slightly abuse notation and use $\risk_{\beta}(\cdot)$ interchangeably for the kernel $\pi$ or the induced vector $\bpi$}
\begin{align}
    \label{eq:optimal_entropic_init}
    \min_{\bpi} \enscond{\risk_{\beta}(\bpi)}{\cC_j(\bpi) \leq 0, \quad \forall j \in [M]}\enspace.
\end{align}
An appealing aspect of the problem in~\eqref{eq:optimal_entropic_init} is that its solution can be explicitly expressed as a function of optimal dual variables, which are obtained by solving a stochastic convex program with a Lipschitz gradient. This result is summarized in the following lemma.
\begin{lemma}
    \label{lemma:relaxation+smoothing}
    For any $\beta > 0$, define $\blambda^\star \in \bbR^M_+$ as a solution of
    \begin{align}
    \label{min-lse}
        \min_{\blambda \geq 0} \left\{ F(\blambda) \eqdef \Exp_\bX\left[\lse\left(
        -\bL(\bX) - \bC(\bX)^\top\blambda\right)\right]\right\}  \enspace.
    \end{align}
Then, under Assumption~\ref{ass:slater}, ~\eqref{eq:optimal_entropic_init} admits a solution in the form
\begin{align}
    \label{eq:optimal_entropic}
    \pi_{\blambda^\star}(a \mid \bx) \propto \exp\left(\beta\left(
    -\bL(\bx) - \bC(\bx)^\top\blambda^\star\right)_a\right) \text{ for } a \in \cA \enspace.
\end{align}
\end{lemma}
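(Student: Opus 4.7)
The proof proceeds by classical Lagrangian duality, exploiting the strict convexity introduced by the entropic regularizer. I set up the Lagrangian of~\eqref{eq:optimal_entropic_init},
\begin{align*}
    L(\bpi, \blambda) = \risk_\beta(\bpi) + \sum_{j=1}^M \lambda_j \cC_j(\bpi) = \Exp_{\bX}\bigl[\scalar{\bL(\bX)+\bC(\bX)^\top\blambda}{\bpi(\bX)} + \beta^{-1}\Psi(\bpi(\bX))\bigr],
\end{align*}
over $\blambda \in \bbR_+^M$, and note that minimization over $\bpi$ (with the simplex constraint $\bpi(\bx) \in \Delta(\cA)$) decouples pointwise in $\bx$.

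Next, for each fixed $\bx$, I minimize the finite-dimensional convex functional $\bv \mapsto \scalar{\bg(\bx)}{\bv} + \beta^{-1}\sum_a v_a \log v_a$ over $\bv \in \Delta(\cA)$, where $\bg(\bx) \eqdef \bL(\bx) + \bC(\bx)^\top \blambda$. A standard computation (Lagrange multiplier for the simplex constraint, or conjugate duality between entropy and log-sum-exp) gives the unique minimizer as the softmax $\pi_{\blambda}(a \mid \bx) \propto \exp(-\beta\, g_a(\bx))$, with minimum value exactly $-\beta^{-1}\log \sum_{a}\exp(-\beta g_a(\bx)) = -\lse(-\bg(\bx))$. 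Integrating against $\Prob_{\bX}$ yields the dual function
\begin{align*}
    g(\blambda) \eqdef \min_{\bpi} L(\bpi,\blambda) = -\Exp_\bX\bigl[\lse(-\bL(\bX)-\bC(\bX)^\top\blambda)\bigr] = -F(\blambda),
\end{align*}
so that maximizing $g$ over $\blambda \geq 0$ is the same as minimizing $F$, and the candidate classifier $\pi_{\blambda^\star}$ is precisely the unique inner minimizer at $\blambda^\star$ in~\eqref{eq:optimal_entropic}.

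It remains to show that $\pi_{\blambda^\star}$ is a primal optimum, for which I invoke strong Lagrangian duality. The primal problem is convex (linear objective plus the convex entropic term, convex linear constraints), the feasible set is non-empty by Assumption~\ref{ass:slater}, and Slater's condition provides a strictly feasible point. Under these hypotheses the standard infinite-dimensional convex duality theorem (e.g.\ Luenberger, as already cited in the proof of Lemma~\ref{lem:NP}) ensures zero duality gap and existence of a dual optimum $\blambda^\star \in \bbR_+^M$; the KKT conditions then yield $\pi_{\blambda^\star}$ feasible with $\scalar{\blambda^\star}{\Exp[\bC(\bX)\bpi_{\blambda^\star}(\bX)]} = 0$, and primal optimality follows because the primal value equals $g(\blambda^\star) = -F(\blambda^\star)$, which is attained by $\pi_{\blambda^\star}$.

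The main obstacle I anticipate is the rigorous justification of the pointwise minimization step: exchanging $\min$ and $\Exp$ in passing from the functional minimization over Markov kernels to the parametric family $\{\pi_\blambda\}$, which requires a measurable selection argument (the softmax is jointly measurable in $(\bx,\blambda)$, so this is routine but worth noting). A secondary technical point is to ensure existence of a minimizer $\blambda^\star$ of $F$; this can be done either by showing coercivity of $F$ on $\bbR_+^M$ under the strict Slater point (the strictly negative slack forces $F(\blambda)\to\infty$ as $\|\blambda\|\to\infty$), or by quoting the strong-duality theorem directly, which guarantees attainment of the dual optimum as part of its conclusion.
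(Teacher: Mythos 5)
Your proof is correct and follows essentially the same route as the paper's: form the Lagrangian of~\eqref{eq:optimal_entropic_init}, minimize it pointwise in $\bx$ using the conjugacy between negative entropy and $\lse$ to obtain the softmax form and the dual function $-F(\blambda)$, then close the argument by strong duality under Assumption~\ref{ass:slater}. The only substantive difference is the tool used for strong duality---the paper appeals to Sion's minimax theorem, whereas you invoke Slater-based convex duality in the style of Luenberger---and your variant has the minor advantage of explicitly delivering attainment of the dual optimum $\blambda^\star$ and the complementary slackness conditions, points the paper defers to the remark after the lemma and to Lemma~\ref{lem:KKT}.
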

\begin{proof}[\textbf{Proof of Lemma}~\ref{lemma:relaxation+smoothing}]
Let us introduce the Lagrangian for the problem in \eqref{eq:optimal_entropic_init}, as:
\begin{align}
    \label{eq:lagrangian}
    \class{L}(\bpi, \blambda)
    &= \risk_{\beta}(\bpi) + \sum_{j \in [M]}\lambda_j\cC_j(\bpi) \nonumber \\
    &= \Exp\left[\sum_{a \in \cA} \left(\ell(\bX, a) + \scalar{\blambda}{\bc(\bX, a)}\right)\pi(a \mid X) + \frac{1}{\beta}\Psi(\bpi(\cdot \mid \bX))\right]\enspace.
\end{align}
Using Sion's minmax theorem, we get the following strong duality
\begin{align*}
     \min_{\bpi} \enscond{\risk_{\beta}(\bpi)}{\cC_j(\bpi) \leq 0, \quad \forall j \in [M]} = \min_{\bpi} \max_{\blambda \geq 0} \class{L}(\bpi, \blambda)  =  \max_{\blambda \geq 0} \min_{\bpi}\class{L}(\bpi, \blambda) \enspace.
\end{align*}
Using the variational representation of $\lse$ that is recalled in Lemma~\ref{lemma:lse_variational} of appendix, we solve the inner minimization problem appearing on the right hand side of the above display. In particular, it holds that
\begin{align*}
    \min_{\bpi} \class{L}(\bpi, \blambda) &= - \max_{\bpi} \left\{\Exp\left[\sum_{a \in \cA} \left(-\ell(\bX, a) - \scalar{\blambda}{\bc(\bX, a)}\right)\pi(a \mid X) - \frac{1}{\beta}\Psi(\bpi(\cdot \mid \bX))\right]\right\} \\
    &= - \Exp\left[\lse\left(\left(
    -\ell(\bX, a) - \scalar{\blambda}{\bc(\bX, a)}\right)
    _{a\in\cA}\right)\right] \enspace,
\end{align*}
where optimum for every $\blambda \geq 0$ is achieved at
\[
\pi_{\blambda}(a \mid \bx) \propto \exp\left(\beta\left(-\ell(\bx, a)-\scalar{\blambda}{\bc(\bx,a)}\right)\right), \quad a \in \cA \enspace.
\]
Finally, we conclude that
\begin{equation*}
     \min_{\bpi} \enscond{\risk_{\beta}(\bpi)}{\cC_j(\bpi) \leq 0, \quad \forall j \in [M]} = \max_{\blambda \geq 0} \{-F(\blambda)\} = \risk_\beta (\bpi_{\blambda^*})\enspace.\qedhere
\end{equation*}
\end{proof}
Assumption~\ref{ass:slater} is essentially used here to establish the strong duality and ensure that the minimizer of~\eqref{min-lse} exists.

Let us also summarize some important properties of the derived optimal entropic-regularized prediction function. We show that $\bpi_{\blambda^\star}$ is in fact feasible for the initial problem of interest in~\eqref{eq:optimal}.
\begin{lemma}[{Feasibility}]
    \label{lemma:constraints}
    Let $\bpi_{\blambda^\star}$ be defined as in Lemma~\ref{lemma:relaxation+smoothing}. Then, $\cC_j(\bpi_{\blambda^\star}) \leq 0$ for all $j \in [M]$.
\end{lemma}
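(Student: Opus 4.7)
The plan is to extract the feasibility statement from first-order optimality of $\blambda^\star$, exploiting the explicit form of the dual objective $F$ supplied by Lemma~\ref{lemma:relaxation+smoothing}. The key observation is that the $j$-th partial derivative of $F$ coincides, up to sign, with the $j$-th constraint evaluated at $\bpi_\blambda$.

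First I would compute $\nabla F(\blambda)$. Recalling that the gradient of $\lse$ is the softmax $\bsigma_\beta$ (with inverse-temperature $\beta$), the chain rule applied to $\blambda \mapsto -\bL(\bx) - \bC(\bx)^\top \blambda$ yields, pointwise in $\bx$,
\begin{align*}
    \nabla_\blambda \lse\bigl(-\bL(\bx) - \bC(\bx)^\top \blambda\bigr) = -\bC(\bx)\,\bsigma_\beta\bigl(-\bL(\bx) - \bC(\bx)^\top \blambda\bigr) = -\bC(\bx)\,\bpi_\blambda(\bx).
\end{align*}
The integrability hypotheses $\Exp\|\bL(\bX)\|<\infty$ and $\Exp\|\bC(\bX)\|_F<\infty$, together with the boundedness of $\bsigma_\beta$ by $1$ in each component, allow differentiation under the expectation via dominated convergence. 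This gives the compact identity
\begin{align*}
    \nabla F(\blambda) = -\Exp_\bX\bigl[\bC(\bX)\bpi_\blambda(\bX)\bigr] = -\bigl(\cC_1(\bpi_\blambda),\ldots,\cC_M(\bpi_\blambda)\bigr)^\top.
\end{align*}

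Next I would apply the first-order optimality conditions at $\blambda^\star$ for the convex program $\min_{\blambda \geq 0} F(\blambda)$. Since $F$ is convex (as the expectation of convex $\lse$ compositions), the KKT conditions for a minimizer $\blambda^\star \in \bbR^M_+$ are simply $\nabla F(\blambda^\star) \geq 0$ componentwise, with complementary slackness $\lambda_j^\star\,(\nabla F(\blambda^\star))_j = 0$. The non-negativity of each partial derivative translates, via the identity above, into $-\cC_j(\bpi_{\blambda^\star}) \geq 0$, that is, $\cC_j(\bpi_{\blambda^\star}) \leq 0$ for every $j \in [M]$, which is the claim.

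The only delicate point is justifying the differentiation under the expectation sign; this is routine once one notes that each coordinate of $\bsigma_\beta(\cdot)$ lies in $[0,1]$ and that $\|\bC(\bX)\|_F$ serves as an $L^1$ envelope for the difference quotients. Once this is dispatched, the rest reduces to reading the KKT condition for the simple non-negativity constraint. As a by-product, complementary slackness would also yield $\langle \blambda^\star, \Exp[\bC(\bX)\bpi_{\blambda^\star}(\bX)]\rangle = 0$, which mirrors the analogous identity appearing in Lemma~\ref{lem:NP}.
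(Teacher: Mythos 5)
Your proof is correct and follows essentially the same route as the paper: the paper's argument also reduces feasibility to the identity $\nabla_{\lambda_j}F(\blambda^\star) = -\cC_j(\bpi_{\blambda^\star})$ combined with the first-order optimality (KKT) conditions for $\min_{\blambda\ge 0}F(\blambda)$, which force $\nabla F(\blambda^\star)\ge 0$ componentwise (the paper phrases this via explicit nonnegative multipliers $\gamma_j$ for the constraint $\blambda\ge 0$, which are exactly the components of $\nabla F(\blambda^\star)$ in your formulation). Your added care about differentiating under the expectation is a welcome detail the paper leaves implicit.
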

\begin{proof}[\textbf{Proof of Lemma}~\ref{lemma:constraints}]
Our goal is to show that $\bpi^\star$ satisfies the required constraints. We are going to rely on Lemma~\ref{lem:KKT}. Indeed, notice that since $\gamma_j \geq 0$ for any $j \in [M]$, we have
\begin{equation*}
    \cC_j(\bpi^*) = \Exp\left[\sum_{a\in \cA}c_j(\bX, a)\pi^*(a \mid \bX)\right] = - \gamma_j \leq 0 \enspace.\qedhere
\end{equation*}
\end{proof}

Moreover, we show that the risk of $\bpi_{\blambda^\star}$ is controlled by entropic-regularization parameter $\beta$.
\begin{lemma}[{Risk gain}]
    \label{lemma:risk}
    Let $\bpi_{\blambda^\star}$ be defined in Lemma~\ref{lemma:relaxation+smoothing}. For any randomized classifier $\bpi$ that is feasible for~\eqref{eq:optimal}, we have \[\mathcal{R}(\bpi_{\blambda^\star}) \leq \mathcal{R}(\bpi)+\frac{\log(|\cA|)}{\beta}\enspace.\]
\end{lemma}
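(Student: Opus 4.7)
The plan is to leverage two facts simultaneously: the optimality of $\bpi_{\blambda^\star}$ for the entropy-regularized problem~\eqref{eq:optimal_entropic_init} (Lemma~\ref{lemma:relaxation+smoothing}) and the basic two-sided bound on the Shannon entropy $\Psi(\bp) = \sum_{a\in\cA} p_a \log p_a$, namely $-\log(|\cA|) \le \Psi(\bp) \le 0$ for any probability vector $\bp$ on $\cA$. Both inequalities are elementary: the upper bound follows from $p_a \log p_a \le 0$ for $p_a \in [0,1]$, and the lower bound is just the fact that entropy is minimized by the uniform distribution on $\cA$.

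First, I would observe that for any randomized classifier $\bpi$, the regularized and unregularized risks are related by
\begin{align*}
\risk(\bpi) - \frac{\log(|\cA|)}{\beta} \le \risk_\beta(\bpi) \le \risk(\bpi),
\end{align*}
by integrating the pointwise entropy bound against $\Prob_\bX$ and dividing by $\beta$.

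Next, I would invoke Lemma~\ref{lemma:constraints} to guarantee that $\bpi_{\blambda^\star}$ itself is feasible for~\eqref{eq:optimal}, and hence also feasible for~\eqref{eq:optimal_entropic_init} (the feasible set is the same). Combining this with Lemma~\ref{lemma:relaxation+smoothing}, which says $\bpi_{\blambda^\star}$ solves~\eqref{eq:optimal_entropic_init}, we get that for every feasible $\bpi$,
\begin{align*}
\risk_\beta(\bpi_{\blambda^\star}) \le \risk_\beta(\bpi).
\end{align*}

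Finally, I would chain the inequalities: using the lower bound on $\risk_\beta(\bpi_{\blambda^\star})$, then optimality, then the upper bound on $\risk_\beta(\bpi)$,
\begin{align*}
\risk(\bpi_{\blambda^\star}) - \frac{\log(|\cA|)}{\beta} \le \risk_\beta(\bpi_{\blambda^\star}) \le \risk_\beta(\bpi) \le \risk(\bpi),
\end{align*}
and rearrange to obtain the desired bound. There is no real obstacle here beyond checking that the pointwise entropy bound is integrable and that feasibility transfers between~\eqref{eq:optimal} and~\eqref{eq:optimal_entropic_init}; both are immediate since the constraint functionals $\cC_j$ do not depend on the regularization.
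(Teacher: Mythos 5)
Your proof is correct, and it takes a cleaner route than the paper's. You treat Lemma~\ref{lemma:relaxation+smoothing} as a black box --- $\bpi_{\blambda^\star}$ minimizes $\risk_\beta$ over the common feasible set of~\eqref{eq:optimal} and~\eqref{eq:optimal_entropic_init} --- and combine it with the two-sided entropy sandwich $\risk(\bpi) - \tfrac{\log(|\cA|)}{\beta} \leq \risk_\beta(\bpi) \leq \risk(\bpi)$, which immediately yields the claim. (The appeal to Lemma~\ref{lemma:constraints} is actually redundant: feasibility of $\bpi_{\blambda^\star}$ is already part of being a solution of~\eqref{eq:optimal_entropic_init}.) The paper instead unrolls the dual machinery explicitly: starting from $-\risk(\bpi)$ for a feasible $\bpi$, it adds the nonnegative term $-\langle\blambda^\star, \Exp[\bC(\bX)\bpi(\bX)]\rangle$, passes to the $\lse$ via its variational representation, bounds the entropy by $\log(|\cA|)/\beta$, and removes the constraint term via complementary slackness (Lemma~\ref{lem:KKT}). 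The two arguments rest on the same underlying duality, but yours is shorter and more modular given what is already proved. What the paper's longer chain buys is that it does not require $\blambda$ to be exactly optimal: the same accounting, with the duality gap tracked explicitly through the gradient mapping, is what powers Lemma~\ref{lemma:grad_map_stat} for arbitrary $\blambda \geq 0$, where your argument (which hinges on $\bpi_{\blambda^\star}$ being an exact minimizer of the regularized constrained problem) would not extend.
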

\begin{proof}[\textbf{Proof of Lemma}~\ref{lemma:risk}]
     Fix some randomized prediction function $\bpi$ that is feasible for the problem in~\eqref{eq:optimal}. Then, we can bound its negative risk as follows
    \begin{equation}
    \begin{aligned}
        -\risk(\bpi)
        &= -\Exp_\bX\left[\sum_{a \in [A]}\ell(\bX, a)\pi(a \mid \bX)\right]\\
        &\stackrel{(a)}{\leq}
        \Exp_\bX\left[\sum_{a \in \cA}\left(-\ell(\bX, a) -\scalar{\blambda^{\star}}{\bc(\bX, a)}\right)\pi(a \mid \bX)\right] \\
        &\stackrel{(b)}{\leq} \Exp_\bX\left[\lse\left(\left(-\ell(\bX, a) -\scalar{\blambda^{\star}}{\bc(\bX, a)}\right)_{a \in \cA}\right)\right]\\
        &\stackrel{(c)}{=}
        \Exp_\bX\left[\sum_{a \in \cA} \left(-\ell(\bX, a) -\scalar{\blambda^{\star}}{\bc(\bX, a)}\right)\pi^{\star}(a \mid \bX) - \frac{1}{\beta}\Psi(\bpi^\star(\cdot \mid \bX))\right]\\
        &\stackrel{(d)}{\leq}
        \Exp_\bX\left[\sum_{a \in \cA} \left(-\ell(\bX, a) -\scalar{\blambda^{\star}}{\bc(\bX, a)}\right)\pi^{\star}(a \mid \bX)\right]
         + \frac{\log(A)}{\beta}\\
        &\stackrel{(e)}{=}
        -\risk(\pi^\star) + \frac{\log(A)}{\beta}\,,
    \end{aligned}
    \end{equation}
for (a) we used the assumption that $\bpi$ is feasible for \eqref{eq:optimal} (\ie $\cC_j(\pi) \leq 0$), which due to the fact that $\blambda^\star\geq 0$ implies
    \begin{align*}
        \scalar{\blambda^\star}{-\Exp_\bX\left[\sum_{a \in \cA}\bc(\bX, a) \pi(a \mid \bX)\right]} \geq 0 \enspace,
    \end{align*}
    since every term in the summation is non-negative; (b) uses the fact that $\lse(\bw) \geq \scalar{\bw}{\bp}$ for any probability vector $\bp$ (see Lemma~\ref{lemma:lse_variational} for details); (c) relies on the variational representation of the $\lse$, recalled in Lemma~\ref{lemma:lse_variational} and the definition of $\bpi^\star(\cdot \mid \bX)$; (d) uses the uniform bound on the entropy; (e) the last equality relies on the complementary slackness condition~\eqref{kkt} of Lemma~\ref{lem:KKT}. It ensures that
    \begin{align*}
        \lambda^\star_j \left(-\Exp_\bX\left[\sum_{a \in \cA}c_j(\bX, a) \pi^\star(a \mid \bX)\right] \right) = 0 \qquad \forall j \in [M]
    \end{align*}
     implying that
     \begin{align*}
         -\Exp_\bX\left[\sum_{a \in \cA} \scalar{\blambda^{\star}}{\bc(\bX, a)}\pi^{\star}(a \mid \bX)\right]
         = 0 \enspace.
     \end{align*}
     The proof is concluded.
\end{proof}
The two results above suggest that $\bpi_{\blambda^\star}$ serves as a strong alternative to the actual minimizer of~\eqref{eq:optimal}, provided that $\beta > 0$ remains within a reasonable range. Accordingly, our goal is to develop an algorithm that replicates the performance of \(\bpi_{\blambda^\star}\). To this end, we introduce the following parametric family of prediction functions. For any \(\blambda \geq 0\), we define
\begin{align}
\label{eq:any_entropic}
\pi_{\blambda}(a \mid \bx) \propto \exp\left(\beta\left(
-\bL(\bx) - \bC(\bx)^\top\blambda\right)_a\right) \text{ for } a \in \cA \enspace.
\end{align}
A key property of this family is its parametric nature: when \(\blambda = \blambda^\star\), it yields a randomized classifier that closely approximates a solution to~\eqref{eq:optimal}. This reformulates the problem into finding a good approximation for \(\blambda^\star \geq 0\).

As noted in~\citep{NEURIPS2024_d5c3ecf3}, while $F$ in \eqref{min-lse} is convex, the primary concern is not the functional optimization error but rather approximate stationarity---vanishing gradient map. To address this, we introduce the gradient mapping—a modified version of the gradient that quantifies the distance between a given point and its projection onto the feasible region after a gradient step. Specifically, for $\alpha > 0$, we define
\begin{equation}
\label{eq:grad_map_to_grad_plus}
\begin{aligned}
    \bG_{\alpha}\left(\blambda\right)
    \eqdef \frac{\blambda- \left(\blambda-\alpha\nabla F\left(\blambda\right)\right)_{+}}{\alpha}\enspace.
\end{aligned}
\end{equation}
Finally, we present our main observation.
\begin{lemma}
    \label{lemma:grad_map_stat}
    Let Assumptions~\ref{ass:slater} be satisfied. For any $\alpha > 0, \beta > 0, \blambda \geq 0$, the randomized classifier $\bpi_{\blambda}$ satisfies
\begin{align*}
    \sqrt{\sum_{j \in [M]}\left(\cC_j(\pi_{\blambda})\right)_+^2} \leq \norm{\bG_{\alpha}\left(\blambda\right)}\enspace,
\end{align*}
Furthermore, for any $\bpi$ that is feasible for~\eqref{eq:optimal}
\begin{align*}
    \mathcal{R}(\bpi_{\blambda})
    \leq \risk(\bpi) + \left(\norm{\blambda} + \alpha\Exp_{\bX}\pqnorm{1}{2}{\bC(\bX)}\right) \cdot\norm{\bG_{\alpha}(\blambda)} + \frac{2\log(|\cA|)}{\beta} \enspace.
\end{align*}
\end{lemma}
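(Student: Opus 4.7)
The first step is to identify the gradient of the dual objective. Since $\nabla \lse(\bw) = \bsigma(\beta\bw)$, differentiation under the expectation yields $\nabla F(\blambda) = -\Exp_\bX[\bC(\bX)\bpi_\blambda(\bX)]$, whose $j$-th coordinate is exactly $-\cC_j(\pi_\blambda)$. Moreover, since $\bpi_\blambda(\bX)$ is a probability vector with $\norm{\bpi_\blambda(\bX)}_1 = 1$, the bound $\norm{\nabla F(\blambda)} \leq \Exp_\bX \pqnorm{1}{2}{\bC(\bX)}$ is immediate from the definition of the subordinate $1\to 2$ norm. Both identities feed into the two parts of the lemma.

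\textbf{Constraint violation bound.} For the first inequality I proceed component-wise. Fix $j \in [M]$ and consider the sign of $\lambda_j + \alpha\cC_j(\pi_\blambda) = \lambda_j - \alpha(\nabla F(\blambda))_j$. If this quantity is nonnegative, the coordinate projection onto $\bbR_+$ is inactive and $(\bG_\alpha(\blambda))_j = -\cC_j(\pi_\blambda)$, so $(\cC_j(\pi_\blambda))_+ \leq |(\bG_\alpha(\blambda))_j|$. Otherwise the projection truncates, $(\bG_\alpha(\blambda))_j = \lambda_j/\alpha \geq 0$, and $\cC_j(\pi_\blambda) < -\lambda_j/\alpha \leq 0$ forces $(\cC_j(\pi_\blambda))_+ = 0 \leq (\bG_\alpha(\blambda))_j$. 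Squaring and summing over $j$ delivers the first claim.

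\textbf{Risk bound.} The risk estimate decomposes into a Fenchel-type comparison and the control of a primal-dual residual by $\bG_\alpha$. For the first piece I apply the variational identity $\lse(\bw) = \max_\bp \{\langle \bp, \bw\rangle - \beta^{-1}\Psi(\bp)\}$ from Lemma~\ref{lemma:lse_variational} with $\bw = -\bL(\bX) - \bC(\bX)^\top\blambda$, first to a feasible $\bpi(\bX)$ (as an inequality) and then to $\bpi_\blambda(\bX)$ (with equality, since the latter is the maximizer). Subtracting and taking expectations, using feasibility of $\bpi$ and $\blambda \geq 0$ to discard the non-positive term $\sum_j \lambda_j \cC_j(\bpi)$, and bounding each entropy by $|\Psi| \leq \log(|\cA|)$ yields
\[
\risk(\bpi_\blambda) \leq \risk(\bpi) - \sum_{j \in [M]} \lambda_j \cC_j(\bpi_\blambda) + \frac{2\log(|\cA|)}{\beta}.
\]
The remaining --- and most delicate --- step is to bound the primal-dual residual $-\sum_j \lambda_j \cC_j(\bpi_\blambda) = \langle \blambda, \nabla F(\blambda)\rangle$ using only $\bG_\alpha(\blambda)$. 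I invoke the variational characterization of Euclidean projection onto $\bbR^M_+$: writing $\blambda^+ = (\blambda - \alpha\nabla F(\blambda))_+ = \blambda - \alpha \bG_\alpha(\blambda)$, one has $\langle \blambda^+ - (\blambda - \alpha\nabla F(\blambda)),\, \bx - \blambda^+\rangle \geq 0$ for every $\bx \geq 0$. Plugging $\bx = \mathbf{0}$, expanding via $\blambda^+ = \blambda - \alpha \bG_\alpha(\blambda)$, and discarding the resulting $-\alpha\norm{\bG_\alpha(\blambda)}^2$ term gives $\langle \blambda, \nabla F(\blambda)\rangle \leq \langle \blambda, \bG_\alpha(\blambda)\rangle + \alpha\langle \nabla F(\blambda), \bG_\alpha(\blambda)\rangle$. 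Cauchy--Schwarz combined with the bound on $\norm{\nabla F(\blambda)}$ from the first paragraph yields the coefficient $(\norm{\blambda} + \alpha\Exp_\bX\pqnorm{1}{2}{\bC(\bX)})\norm{\bG_\alpha(\blambda)}$, and substituting into the display above completes the proof.
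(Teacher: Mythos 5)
Your proof is correct, and while Part I matches the paper's argument (your coordinate-wise case analysis of $\bG_{\alpha}$ is just the paper's scalar inequality $|\min\{a/\alpha, b\}| \geq (-b)_+$ spelled out in two cases), Part II takes a genuinely different and arguably cleaner route. The paper reaches the intermediate bound on $\risk(\bpi_{\blambda})$ by first writing $\risk_\beta(\bpi_{\blambda}) + F(\blambda) = \scalar{\blambda}{\nabla F(\blambda)}$ via the Lagrangian, then lower-bounding $F(\blambda)$ by $\min_{\blambda} F = -\risk_\beta(\bpi_{\blambda^\star})$ and invoking Lemma~\ref{lemma:risk}, which itself rests on the complementary slackness conditions of Lemma~\ref{lem:KKT} and hence on the existence of $\blambda^\star$ guaranteed by Slater's condition. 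You instead evaluate the $\lse$ variational identity of Lemma~\ref{lemma:lse_variational} twice --- with equality at $\bpi_{\blambda}$ and as an inequality at the feasible competitor $\bpi$ --- and subtract; this bypasses $\blambda^\star$, the KKT conditions, and Lemma~\ref{lemma:risk} entirely, and as a byproduct shows that Assumption~\ref{ass:slater} is not actually needed for this lemma (the paper states it as a hypothesis because its proof routes through the dual optimum). For the residual $\scalar{\blambda}{\nabla F(\blambda)}$, the paper again does an explicit case analysis on the coordinates of $\bG_{\alpha}$, whereas you invoke the obtuse-angle variational inequality for the projection onto $\bbR^M_+$ at $\bx = \mathbf{0}$ and discard the nonpositive $-\alpha\norm{\bG_{\alpha}(\blambda)}^2$ term; both yield the same bound $(\norm{\blambda} + \alpha\norm{\nabla F(\blambda)})\norm{\bG_{\alpha}(\blambda)}$ after Cauchy--Schwarz, but your version is the more standard and reusable argument. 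The only cosmetic looseness is that your direct subtraction actually yields $\log(|\cA|)/\beta$ rather than $2\log(|\cA|)/\beta$ if you keep track of the signs of the two entropy terms, so bounding each by $\log(|\cA|)$ separately only loses a factor that the statement already allows.
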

\begin{proof}[\textbf{Proof of Lemma}~\ref{lemma:grad_map_stat}]
    Fix arbitrary $\blambda \geq 0$ and consider $\bpi_{\blambda}$, defined in~\eqref{eq:any_entropic}. To ease the notation, in this proof, we write ${\bpi}$ instead of $\bpi_{\blambda}$. The proof will consist of two parts, in the first we establish constraint violation control and then provide risk bound.\\
\textbf{Part I.}
We first make the following elementary observation: for any $\alpha, a\geq0$ and $b \in \bbR$, we have
\begin{align*}
    &\left|\frac{a-(a-\alpha b)_{+}}{\alpha}\right| = \left|\frac{a-\max\{0; a-\alpha b\}}{\alpha}\right| = \left|\min\left\{\frac{a}{\alpha};b\right\}\right| \geq \left|\min\left\{0;b\right\}\right| \geq (-b)_{+} \enspace.
\end{align*}
The above in conjunction with the definition of $\bG_\alpha$ in~\eqref{eq:grad_map_to_grad_plus}, yields
\begin{align}
    \label{unfairness-init}
    \norm{\left(-\nabla F(\blambda)\right)_{+}} \leq \norm{\bG_{\alpha}(\blambda)}\qquad \forall \blambda \geq 0\enspace.
\end{align}
Computing the gradient of $F$ and using the expression for $\bpi$ in~\eqref{eq:any_entropic}, we observe that
\begin{equation}
    \label{eq:true_grad_recalled}
\begin{aligned}
    &\nabla_{\lambda_j}F({\blambda}) = -\Exp_\bX\left[\sum_{a \in \cA}C_j(\bX, a) \pi(a \mid \bX)\right] = -\cC_j(\bpi) \enspace.
\end{aligned}
\end{equation}
The above two displays conclude the proof of the first part of the Lemma.

\noindent\textbf{Part II.}
We note that $\bpi_{\blambda}$ is a unique solution to
\begin{align*}
    \min_{\bpi} \class{L}(\bpi, \blambda)\enspace,
\end{align*}
where $\class{L}$ is the Lagrangian defined in~\eqref{eq:lagrangian}. Furthermore, $\min_{\bpi} \class{L}(\bpi, \blambda) = -F(\blambda) = \class{L}(\bpi_{\blambda}, \blambda)$. Hence, recalling \eqref{eq:true_grad_recalled}, we get
\begin{align*}
    \risk_\beta(\bpi_{\blambda}) + F(\blambda)
    &= \risk_\beta(\bpi_{\blambda}) - \class{L}(\bpi_{\blambda}, \blambda) = - \sum_{j \in [M]}\lambda_j\cC_j(\bpi) = \scalar{\blambda}{\nabla F({\blambda})} \enspace.
\end{align*}
Now, let us discuss the possible values of each element of $\bG_\alpha$. For any $j \in [M]$, since $\lambda_J \geq 0$, we have
\begin{align*}
    \bG_{\alpha j}(\blambda) =
    \begin{cases}
        \nicefrac{\lambda_j}{\alpha}  &\text{if } \alpha\partial_j F(\blambda) \geq \lambda_j \\
        \partial_j F(\blambda)  &\text{otherwise }
    \end{cases}\,.
\end{align*}
Let us denote by $\tilde\blambda \eqdef \blambda-\alpha\nabla F(\blambda)$ and notice, that
\begin{align*}
    \lambda_j\partial_j F(\blambda) &= \alpha \partial_j F(\blambda) \bG_{\alpha j} (\blambda)\ind{\tilde \lambda_j \leq 0} + \lambda_j \bG_{\alpha j} (\blambda) \ind{\tilde \lambda_j > 0}\enspace.
\end{align*}
Thus, bounding the indicators by one, we deduce
\begin{align*}
    \scalar{\blambda}{\nabla F(\blambda)}
    &\leq
    \sum_{j \in [M]}\left( \alpha |\partial_j F(\blambda) \bG_{\alpha j} (\blambda)| + |\lambda_j \bG_{\alpha j} (\blambda)|\right) \leq
    \left(\norm{\blambda} + \alpha\norm{\nabla F(\blambda)}\right)\norm{\bG_{\alpha}(\blambda)} \\
     &\leq \left(\norm{\blambda} + \alpha\Exp_\bX \pqnorm{1}{2}{\bC(\bX)}\right)\norm{\bG_{\alpha}(\blambda)}\enspace,
\end{align*}
where the second inequality comes from Cauchy-Schwartz and the last one follows from Lemma~\ref{lemma:normF-bd} that bounds $\norm{\nabla F(\blambda)}$.

To conclude the proof, we observe that $\min_{\blambda} F(\blambda) = -\risk_\beta(\bpi_{\blambda^\star})$, the fact that $ \risk_{\beta}(\bpi) \leq \risk(\bpi) + \tfrac{\log(2L + 1)}{\beta} $ for all $\bpi$, and invoke Lemma~\ref{lemma:risk}.
\end{proof}
Lemma~\ref{lemma:grad_map_stat} demonstrates the fact, that if we are able to control the norm of the gradient mapping, then we can obtain a reliable estimator in terms of risk and "constraint satisfaction". This is the central observation of this work and our theoretical results are all derived from it.

\paragraph{Towards data-driven post-processing.}
Lemma~\ref{lemma:grad_map_stat} is particularly useful when both the loss vector \(\bL\) and the constraint matrix \(\bC\) in~\eqref{min-lse} are known in advance, with the only unknown component in $F$ being the expectation operator \(\Exp_{\bX}\). However, as demonstrated in several examples above, we often do not know \(\bL\), \(\bC\), or both. In these cases, Lemma~\ref{lemma:grad_map_stat} provides limited practical guideline, making it difficult to find \(\blambda\) that minimizes \(\norm{\bG_{\alpha}(\blambda)}\). Nonetheless, when estimators such as \(\hat{\bL}\), \(\hat{\bC}\) of \(\bL\), \(\bC\) are available, we can derive more informative results.

Before stating them, let us introduce plug-in versions of $F, \pi_{\blambda}$, and $\bG_{\alpha}$ as follows:
\begin{align}
    \label{eq:hatF}
    \hat{F}(\blambda) &\eqdef \Exp_\bX\left[\lse\left(
        -\hat{\bL}(\bX) - \hat{\bC}(\bX)^\top\blambda\right)\right]\\
        \label{eq:hatPi}
    \hat{\pi}_{\blambda}(a \mid \bx) &\propto \exp\left(\beta\left(
-\hat{\bL}(\bx) - \hat{\bC}(\bx)^\top\blambda\right)_a\right) \text{ for } a \in \cA\\
\label{eq:HatG}
\hat{\bG}_{\alpha}\left(\blambda\right)
    &\eqdef \frac{\blambda- \left(\blambda-\alpha\nabla \hat{F}\left(\blambda\right)\right)_{+}}{\alpha}\enspace.
\end{align}
We are in position to derive our main master lemma, that will allow us to use any off-the-shelf stochastic optimization algorithm and deduce statistically sound results.
\begin{lemma}
     \label{lemma:hat_grad_map_stat}
    Let \(\hat{\bL}\), \(\hat{\bC}\), be some estimators of \(\bL\), \(\bC\).
    Let Assumptions~\ref{ass:slater} be satisfied. Assume additionally that there exists $\bpi$ such that $\Exp[\hat{\bC}(\bX) \bpi(\bX)] < 0$.
    Let
    \begin{align*}
        \hat{\Delta}_{\loss} \eqdef \Exp\|\bL(\bX) - \hat{\bL}(\bX)\|_{\infty} \quad\text{ and }\quad \hat{\Delta}_{\constr}^2 \eqdef {\Exp\pqnormin{1}{2}{\bC(\bX) - \hat{\bC}(\bX)}^2}\enspace,
    \end{align*}
    stand for estimation error of the loss function and constraints.
    For any $\alpha > 0, \beta > 0, \blambda \geq 0$, the randomized classifier $\hat{\bpi}_{\blambda}$ satisfies
\begin{align*}
    \sqrt{\sum_{j \in [M]}\left(\cC_j(\hat{\bpi}_{\blambda})\right)_+^2} \leq \normin{\hat{\bG}_{\alpha}\left(\blambda\right)} + \hat{\Delta}_{\constr}\enspace,
\end{align*}
Furthermore, for any $\bpi$ that is feasible for~\eqref{eq:optimal}
\begin{align*}
    \mathcal{R}(\hat{\bpi}_{\blambda})
    \leq \risk(\bpi) + \big(\norm{\blambda} + \alpha\,\Exp_{\bX}\pqnormin{1}{2}{\hat{\bC}(\bX)}\big) \cdot\normin{\hat{\bG}_{\alpha}(\blambda)} + 2\hat{\Delta}_{\loss} + \hat{\Delta}_{\constr}\cdot\norm{\blambda} +  \frac{2\log(|\cA|)}{\beta} \enspace.
\end{align*}
\end{lemma}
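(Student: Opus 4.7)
The plan is to replay the argument of Lemma~\ref{lemma:grad_map_stat} but for the plug-in objective $\hat F$ and randomized classifier $\hat{\bpi}_{\blambda}$, and then transfer the resulting bounds (which naturally involve $\hat{\cC}_j$ and a plug-in risk $\hat{\risk}$) to the true $\cC_j$ and $\risk$ by absorbing the estimation errors $\hat{\Delta}_{\constr}$ and $\hat{\Delta}_{\loss}$.

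\textbf{Part I (constraint satisfaction).} A direct computation, identical to~\eqref{eq:true_grad_recalled} but with $\hat{\bC}, \hat{\bL}$ in place of $\bC, \bL$, gives $\nabla_j \hat F(\blambda) = -\hat{\cC}_j(\hat{\bpi}_{\blambda})$. The same scalar inequality used in Part~I of Lemma~\ref{lemma:grad_map_stat} then yields, componentwise, $(\hat{\cC}_j(\hat{\bpi}_{\blambda}))_+ \leq |\hat{\bG}_{\alpha j}(\blambda)|$, and hence $\sqrt{\sum_{j}(\hat{\cC}_j(\hat{\bpi}_{\blambda}))_+^2} \leq \|\hat{\bG}_{\alpha}(\blambda)\|$. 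To transfer this to the true constraints, I bound the discrepancy pointwise: since $\hat{\bpi}_{\blambda}(\cdot \mid \bX)$ is a probability vector,
\[
\|(\bC(\bX) - \hat{\bC}(\bX))\,\hat{\bpi}_{\blambda}(\bX)\|_2 \;\leq\; \|\bC(\bX) - \hat{\bC}(\bX)\|_{1 \to 2}\,;
\]
taking expectations, Jensen on the square-root, and Cauchy--Schwarz in $L^2(\Prob_{\bX})$ yield $\sqrt{\sum_{j}(\cC_j - \hat{\cC}_j)(\hat{\bpi}_{\blambda})^2} \leq \hat{\Delta}_{\constr}$. A triangle inequality in $\ell_2$ closes Part~I.

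\textbf{Part II (risk).} I first reproduce the identity $\hat{\risk}_{\beta}(\hat{\bpi}_{\blambda}) + \hat F(\blambda) = \scalar{\blambda}{\nabla \hat F(\blambda)}$ (the plug-in analogue of the one in Lemma~\ref{lemma:grad_map_stat}, using that $\hat{\bpi}_{\blambda}$ is the unique minimizer of the plug-in Lagrangian $\hat{\class{L}}(\cdot, \blambda)$), and I bound its right-hand side by $(\|\blambda\| + \alpha \Exp\|\hat{\bC}(\bX)\|_{1 \to 2})\,\|\hat{\bG}_{\alpha}(\blambda)\|$ by exactly the same Cauchy--Schwarz/gradient-norm argument. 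The new ingredient is an upper bound on $-\hat F(\blambda)$: by the variational identity for $\lse$, $-\hat F(\blambda) = \min_{\bpi'} \hat{\class{L}}(\bpi', \blambda) \leq \hat{\class{L}}(\bpi, \blambda)$ for any $\bpi$ feasible for~\eqref{eq:optimal}. For such $\bpi$ I bound $\hat{\risk}_{\beta}(\bpi) \leq \hat{\risk}(\bpi) \leq \risk(\bpi) + \hat{\Delta}_{\loss}$ (the entropy is non-positive and the loss gap is at most $\hat{\Delta}_{\loss}$), and
\[
\sum_{j \in [M]} \lambda_j \hat{\cC}_j(\bpi) \;\leq\; \sum_{j \in [M]} \lambda_j \cC_j(\bpi) + \|\blambda\| \cdot \hat{\Delta}_{\constr} \;\leq\; \|\blambda\|\, \hat{\Delta}_{\constr},
\]
where the first inequality is Cauchy--Schwarz on the vector $(\hat{\cC}_j(\bpi) - \cC_j(\bpi))_j$ (its $\ell_2$-norm controlled exactly as in Part~I), and the second uses $\lambda_j \geq 0$ with $\cC_j(\bpi) \leq 0$. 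Combining gives $-\hat F(\blambda) \leq \risk(\bpi) + \hat{\Delta}_{\loss} + \|\blambda\|\,\hat{\Delta}_{\constr}$. Finally, converting $\hat{\risk}_{\beta}(\hat{\bpi}_{\blambda})$ back to $\risk(\hat{\bpi}_{\blambda})$ via $\risk \leq \hat{\risk} + \hat{\Delta}_{\loss}$ and $\hat{\risk} \leq \hat{\risk}_{\beta} + \log(|\cA|)/\beta$ (uniform entropy bound $-\log|\cA| \leq \Psi \leq 0$) yields the stated risk inequality.

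\textbf{Main obstacle.} The delicate step is the Part~II bound on $\sum_{j} \lambda_j \hat{\cC}_j(\bpi)$: the feasibility hypothesis is for the true constraints while the Lagrangian involves the estimated ones, so the constraint-discrepancy vector must be absorbed by a Cauchy--Schwarz step that scales as $\|\blambda\|_2$ rather than, say, as $\sqrt{M}\,\|\blambda\|_\infty$. This is only possible because $\hat{\Delta}_{\constr}$ is defined through the $\ell_1 \to \ell_2$ operator norm of $\bC - \hat{\bC}$, which is precisely the norm that matches $\bpi(\bX) \in \Delta(\cA)$ on the right and the Euclidean dual pairing with $\blambda$ on the left, producing the cross-term $\|\blambda\|\,\hat{\Delta}_{\constr}$ in the final bound.
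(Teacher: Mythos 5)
Your proof is correct. Part I is essentially identical to the paper's: the same gradient identity $\nabla_j\hat F(\blambda)=-\hat{\cC}_j(\hat{\bpi}_{\blambda})$, the same $\ell_2$ triangle inequality, and the same control of the discrepancy vector via $\normin{(\bC(\bX)-\hat{\bC}(\bX))\hat{\bpi}_{\blambda}(\bX)}\leq \pqnormin{1}{2}{\bC(\bX)-\hat{\bC}(\bX)}$ followed by Jensen. Part II shares the skeleton (the Lagrangian identity $\hat{\risk}_\beta(\hat{\bpi}_{\blambda})+\hat F(\blambda)=\scalar{\blambda}{\nabla\hat F(\blambda)}$ and the Cauchy--Schwarz/gradient-mapping bound), but you diverge in how $-\hat F(\blambda)$ is controlled. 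The paper converts $\hat F$ back to the population $F$ using the $1$-Lipschitzness of $\lse$ in the sup-norm (this is where its $2\hat{\Delta}_{\loss}+\norm{\blambda}\hat{\Delta}_{\constr}$ terms arise) and then reuses the chain $-F(\blambda)\leq \risk_\beta(\bpi_{\blambda^\star})$ together with Lemma~\ref{lemma:risk}, which routes the argument through the population dual optimum $\blambda^\star$ and Slater's condition. You instead bound $-\hat F(\blambda)=\min_{\bpi'}\hat{\class{L}}(\bpi',\blambda)\leq\hat{\class{L}}(\bpi,\blambda)$ directly at the feasible comparator $\bpi$ and absorb the loss and constraint estimation errors there; this produces the same cross-terms $\hat{\Delta}_{\loss}$ and $\norm{\blambda}\hat{\Delta}_{\constr}$ while never invoking $F$, $\blambda^\star$, or Lemma~\ref{lemma:risk}, and it pays the entropy only once, giving $\log(|\cA|)/\beta$ in place of $2\log(|\cA|)/\beta$ --- a marginally sharper constant that of course implies the stated bound. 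Your closing observation that the $\pqnormin{1}{2}{\cdot}$ norm is exactly the one matching a probability vector on one side and the Euclidean pairing with $\blambda$ on the other is precisely the mechanism the paper exploits via the max-column-norm identity of \citet{lewis2023}.
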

\begin{proof}[Proof of Lemma~\ref{lemma:hat_grad_map_stat}]
Fix arbitrary $\blambda \geq 0$ and consider $\hat{\bpi}_{\blambda}$, defined in~\eqref{eq:hatPi}. To ease the notation, in this proof, we write ${\hat{\pi}}$ instead of $\hat{\bpi}_{\blambda}$.\\
\textbf{Part I.}
Thanks to the triangle's inequality
\begin{align*}
    \sqrt{\sum_{j \in [M]}\left({\cC}_j(\hat{\pi})\right)_+^2} \leq \sqrt{\sum_{j \in [M]}\big(\hat{\cC}_j(\hat{\pi})\big)_+^2} + \sqrt{\sum_{j \in [M]}\big|\hat{\cC}_j(\hat{\pi}) - {\cC}_j(\hat{\pi})\big|^2}\enspace.
\end{align*}
To bound the first term on the right hand side of the above inequality, we use similar arguments as in the first part of the proof of Lemma~\ref{lemma:grad_map_stat} and deduce that
\begin{align*}
    {\sum_{j \in [M]}\big(\hat{\cC}_j(\hat{\pi})\big)_+^2} \leq\normin{\hat{\bG}_{\alpha}(\blambda)}^2\enspace.
\end{align*}
It remains to bound the second term. To that end, we proceed as
\begin{align*}
    \sum_{j \in [M]}\big|\hat{\cC}_j(\hat{\pi}) - {\cC}_j(\hat{\pi})\big|^2
    &\leq
    \Exp\sum_{j \in [M]}  
    \left|\sum_{a \in \cA}\hat{\pi}(a \mid \bX)(C_j(\bX, a) - {\hat{C}_j}(\bX, a))
    \right|^2\\
    &=
    \Exp\|(\bC(\bX) - {\hat{\bC}}(\bX))\hat{\bpi}(\bX)\|^2
    \leq
    \Exp\pqnormin{1}{2}{\bC(\bX) - {\hat{\bC}}(\bX)}^2\enspace,
\end{align*}
where the first inequality comes from Jensen and the last one from~\citep[Theorem 2.1]{lewis2023}.

\textbf{Part II.} Similarly to the second part of the proof of Lemma~\ref{lemma:grad_map_stat} and thanks to the Slatter's condition, assumed in the statement, we deduce that
\begin{align*}
\hat{\risk}_\beta(\hat{\pi}) + \hat{F}(\blambda) \leq \left(\norm{\blambda} + \alpha\Exp_\bX \pqnormin{1}{2}{\hat{\bC}(\bX)}\right)\normin{\hat{\bG}_{\alpha}(\blambda)}\enspace,
\end{align*}
where we define
\begin{align*}
    \hat{\risk}_\beta(\hat{\pi}) = {\risk}_\beta(\hat{\pi}) + \Exp\sum_{a \in \cA}(\ell(\bX, a) - \hat{\ell}(\bX, a))\hat{\pi}(a \mid \bX)\enspace.
\end{align*}
Due to the definition of $F$ and $\hat{F}$ and the fact that $\|\nabla\lse(\cdot)\|_1 = 1$ (see Lemma~\ref{lemma:lse}), we have that
\begin{align*}
    |F(\blambda) - \hat{F}(\blambda)|
    &\leq
    \Exp\|\bL(\bX) + \bC(\bX)^\top\blambda - \hat{\bL}(\bX) - \hat{\bC}(\bX)^\top\blambda\|_{\infty}\\
    &\leq \Exp\|\bL(\bX) - \hat{\bL}(\bX)\|_{\infty} + \Exp\|(\bC(\bX) - \hat{\bC}(\bX))^\top\blambda\|_{\infty}\enspace.
\end{align*}
The combination of the above two displays yield
\begin{align*}
{\risk}_\beta(\hat{\pi}) + {F}(\blambda) \leq \big(\norm{\blambda} &+ \alpha\Exp_\bX \pqnormin{1}{2}{\hat{\bC}(\bX)}\big)\normin{\hat{\bG}_{\alpha}(\blambda)}\\
&+ 2\Exp\|\bL(\bX) - \hat{\bL}(\bX)\|_{\infty} + \|\blambda\|\Exp\pqnormin{2}{\infty}{(\bC(\bX) - \hat{\bC}(\bX))^\top}\enspace.
\end{align*}
Noticing that~\cite[See Theorem 2.1 (ii) and (vi)]{lewis2023}
\begin{align*}
    \pqnormin{2}{\infty}{(\bC(\bX) - \hat{\bC}(\bX))^\top} = \pqnormin{1}{2}{\bC(\bX) - \hat{\bC}(\bX)}\enspace,
\end{align*}
we conclude with identical arguments as for Lemma~\ref{lemma:grad_map_stat}.
\end{proof}

\subsection{Discussion on Lemma~\ref{lemma:hat_grad_map_stat}.}
\label{subsec:discussion_on_methodo}
Lemma~\ref{lemma:hat_grad_map_stat} is designed for practical applications. It states that to “post-process” given estimators of the loss and constraint functions, it suffices to find \(\blambda \geq 0\) such that \(\normin{\hat{\bG}_{\alpha}(\blambda)}\) is as small as possible. The key advantage of Lemma~\ref{lemma:hat_grad_map_stat} over its oracle counterpart, Lemma~\ref{lemma:grad_map_stat}, is that the only unknown quantity in the definition of $\hat{F}$ is the expectation with respect to the marginal distribution of \(\bX\).

Notably, for \(\bX \sim \Prob_{\bX}\), the mapping
\begin{align*}
\blambda \mapsto \lse\left(
-\hat{\bL}(\bX) - \hat{\bC}(\bX)^\top\blambda\right)\enspace,
\end{align*}
is an unbiased estimator of $\hat{F}$, while its gradient serves as an unbiased estimator of $\nabla F$. This property is particularly useful, as it enables the selection of an appropriate \(\blambda \geq 0\) using stochastic optimization methods. In other words, given $T$ i.i.d. samples from $\Prob_{\bX}$ that could potentially arrive in a stream, our goal is to build $\hat{\blambda}_T$ such that
\begin{align*}
    \Expf\|\hat{\bG}_{\alpha}(\hat{\blambda}_T)\|^2 \quad \text{is as small as possible},
\end{align*}
as a function of $T$. Then, plug-in $\hat{\blambda}_T$ into $\hat{\bpi}_{\blambda}$ and invoking Lemma~\ref{lemma:hat_grad_map_stat}, we would obtain a post-processing guarantee for any base estimator of the loss and constraints.

More qualitatively, we note that the error in loss estimation is measured using the $\infty$-norm, which is advantageous as it can scale with \(\log(|\cA|)\) rather than exhibiting a polynomial dependence on the number of classes. Similarly, the error in constraint estimation is measured using the \(\pqnormin{1}{2}{\cdot}\) norm, leading to a more favorable dependence on the size of \(\cA\).

A key feature of the above results is that when the constraint matrix is known—such as in classification with rejection under a controlled rejection rate or in classification under the demographic parity constraint in the awareness setting—the first bound in Lemma~\ref{lemma:hat_grad_map_stat} is essentially identical to that of its oracle counterpart, Lemma~\ref{lemma:grad_map_stat}. This aligns with the literature on post-processing algorithms and has been observed in various settings~\citep{schreuder2021,denis2021,Denis2017}, among others.

\subsection{Examples of estimators for loss and constraints under various assumptions.} As illustrated by the examples in Section~\ref{sub:examples_of_problems}, both the loss and constraint functions often depend on the conditional distribution of $Y \mid \bX$ or $S \mid \bX$, particularly in the fairness settings we have considered. In this section, we review some classical results on estimating conditional distributions in the discrete case and examine their implications for the errors $\hat{\Delta}_{\loss}$ and $\hat{\Delta}_{\constr}$ introduced in Lemma~\ref{lemma:hat_grad_map_stat}.

To focus on the main ideas without delving into technicalities, we assume that the target quantity is $\eta(\bx) = \Prob(Y = 1 \mid \bX = \bx)$. The multi-class case can be treated analogously via a one-vs-all approach, by constructing a separate estimator for each class. Specifically, when $Y \in [K]$, one can define $K$ functions $p_y(\bx) = \Prob(Y = y \mid \bX = \bx)$, each estimated from a binary dataset of the form $(\bX_i, \ind{Y_i = y})$ for every $y \in [K]$. A similar strategy applies when addressing the fairness examples in Section~\ref{sub:examples_of_problems}, where one constructs functions $\tau_s(\bx) = \Prob(S = s \mid \bX = \bx)$, requiring the resolution of $|\cS|$ binary classification problems. Hence, restricting our discussion to the estimation of $\eta(\bx) = \Prob(Y = 1 \mid \bX = \bx)$ is not limiting, as more general cases follow similar lines.
There are several settings in which optimal finite-sample estimation bounds for $\eta$ are known. Below, we provide some details on the estimation procedure under nonparametric assumptions. For parametric examples, we refer the reader to~\cite{van2008high}.

Assume that $\cX = [-1, 1]^d$, for any multi-index $\balpha = (\alpha_1, \ldots, \alpha_d) \in \bbN^d$ and $\bx \in \cX$, define $\bx^{\balpha} \eqdef x_1^{\alpha_1} \cdot \ldots \cdot x_d^{\alpha_d}$ and $|\balpha| = \sum_{j = 1}^d \alpha_j$. Let $K$ be a kernel and $h > 0$ a bandwidth, the local polynomial estimator~\citep[see e.g.,][]{tsybakov2009nonparametric} is defined via a polynomial $\hat{\theta}_{\bx}$ of degree $\ell \geq 0$ that minimizes
\begin{align*}
    \sum_{i = 1}^n\big(Y_i - \hat{\theta}_{\bx}(\bX_i - \bx)\big)^2K\left(\frac{\bX_i - \bx}{h}\right)\,.
\end{align*}
The local polynomial estimator $\hat{\eta}(\bx)$ of degree $\ell \geq 0$ of the value $\eta(\bx)$ is then defined as $\hat{\eta}(\bx) \eqdef \hat{\theta}_{\bx}(0)$ if the minimizer of the above problem is unique and is set to zero otherwise.
To exhibit the statistical result available for this estimator, let us first introduce some key assumptions.
\begin{assumption}
    \label{ass:non-para}
    We assume that there exist $\beta \in \bbN \setminus \{0\}$ and $L > 0$, such that
    \begin{align*}
        |\eta(\bx) - \eta_{\bx}(\bx')| \leq L \|\bx - \bx'\|^\beta\,,
    \end{align*}
    where $\eta_{\bx}$ is Taylor polynomial of degree $\beta$ at $\bx$,
    \begin{align*}
        \eta_{\bx}(\bx') = \sum_{|\alpha| \leq \beta}\frac{(\bx - \bx')^\alpha}{\balpha!} D^\balpha \eta(\bx)\,,
    \end{align*}
    where $\balpha ! = \alpha_1 ! \cdot \ldots \cdot \alpha_d!$ and $D^\balpha \eqdef \tfrac{\partial^{|\balpha|}}{\partial x_1^{\alpha_1} \cdots \partial x_d^{\alpha_d}}$\,. We additionally assume that the support of $\Prob_{\bX}$ is $\cX = [-1, 1]^d$ and it admits density uniformly lower and upper bounded on $\cX$.
\end{assumption}
The next result is borrowed from~\cite{tsybakov2007fast}, who considered local polynomial estimator for building plug-in classification rules.
\begin{theorem}
    Let Assumption~\ref{ass:non-para} be satisfied, then there exists a choice of kernel $K$ and bandwidth $h = h_n$, such that the locally polynomial estimator satisfies
    \begin{align*}
        \Expf\|\hat{\eta} - \eta\|_1 \leq C n^{-\frac{\beta}{2\beta + d}}\,,
    \end{align*}
    for some $C > 0$ that is independent of $n$.
\end{theorem}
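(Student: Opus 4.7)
The plan is to establish the rate via the standard bias--variance analysis of the local polynomial estimator and then pass from a pointwise $L_2$ bound to the advertised $L_1$ bound via Jensen's inequality combined with the boundedness of the marginal density that is guaranteed by Assumption~\ref{ass:non-para}.

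First I would fix a kernel $K$ that is nonnegative, bounded, and compactly supported in the unit ball, and choose the local polynomial degree $\ell \ge \beta$ so that the estimator exactly reproduces every polynomial of degree up to $\beta$. For each interior point $\bx \in \cX$, solving the weighted least-squares normal equations expresses $\hat{\eta}(\bx)$ as a linear combination $\sum_i w_{n,h}(\bx, \bX_i) Y_i$ of the labels, with weights supported on the kernel window of radius $h$. The structural fact that drives the rate is that on the event where the rescaled local Gram matrix $(nh^d)^{-1}\sum_i K((\bX_i - \bx)/h)\,\phi(\bX_i - \bx)\phi(\bX_i - \bx)^\top$ (with $\phi$ collecting the monomials $(\cdot)^{\balpha}$ for $|\balpha| \le \ell$) stays close to its population limit, these weights reproduce polynomials of degree at most $\ell$.

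The main technical step is the pointwise bias--variance decomposition. For the bias, the polynomial reproduction property lets me subtract the Taylor polynomial $\eta_{\bx}$ inside the weighted sum, and Assumption~\ref{ass:non-para} then yields a bias of order $h^{\beta}$, since the kernel enforces $\|\bX_i - \bx\| \le h$. For the variance, since $Y_i \in \{0,1\}$ its conditional variance is at most $1/4$, and a lower bound on the smallest eigenvalue of the rescaled Gram matrix---guaranteed with high probability by the uniform lower bound on the density together with a standard matrix concentration argument---yields variance of order $(nh^d)^{-1}$. Balancing $h^{2\beta} \asymp (nh^d)^{-1}$ leads to the optimal bandwidth $h_n \asymp n^{-1/(2\beta+d)}$ and pointwise MSE
\[
\Expf\big(\hat{\eta}(\bx) - \eta(\bx)\big)^2 \;\lesssim\; n^{-2\beta/(2\beta+d)}\enspace.
\]
Jensen's inequality then gives $\Expf|\hat{\eta}(\bx) - \eta(\bx)| \lesssim n^{-\beta/(2\beta+d)}$ pointwise, and integrating against the uniformly bounded density of $\bX$ on the compact support $\cX$ yields the announced $L_1$ rate.

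The main obstacle will be handling points $\bx$ within distance $h_n$ of the boundary of $[-1,1]^d$: there only a fraction of the kernel window lies inside $\cX$ and the rescaled Gram matrix may degenerate, so the bulk pointwise analysis breaks down. The usual remedy is to split the integration into an interior region, where the analysis above applies, and a boundary layer whose $\Prob_{\bX}$-measure is $O(h_n) = O(n^{-1/(2\beta+d)})$. On this layer I use the trivial bound $|\hat{\eta} - \eta| \le 1$ (which covers, in particular, the event where $\hat{\eta}$ defaults to zero because the local Gram matrix fails to be invertible). Since this boundary contribution decays at least as fast as the bulk rate, it does not dominate the final bound, and the theorem follows by combining the interior and boundary contributions.
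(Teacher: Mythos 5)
Your overall strategy (polynomial reproduction, bias of order $h^\beta$ from Assumption~\ref{ass:non-para}, variance of order $(nh^d)^{-1}$ via a lower bound on the local Gram matrix, and balancing to get $h_n\asymp n^{-1/(2\beta+d)}$) is the standard and correct route; the paper itself does not prove the theorem but imports it from \citet{tsybakov2007fast}, whose argument follows exactly this bias--variance scheme. However, your treatment of the boundary contains a genuine quantitative error. You bound the boundary layer of width $h_n$ trivially by $|\hat\eta-\eta|\le 1$ and claim its contribution, of order $\Prob_{\bX}(\text{layer}) = O(h_n) = O\bigl(n^{-1/(2\beta+d)}\bigr)$, ``decays at least as fast as the bulk rate.'' That is only true when $\beta\le 1$: for $\beta\ge 2$ (which the assumption permits, since $\beta\in\bbN\setminus\{0\}$) one has $n^{-1/(2\beta+d)} \gg n^{-\beta/(2\beta+d)}$, so the boundary term \emph{dominates} and your decomposition fails to deliver the advertised rate.

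The fix is that no boundary layer is needed at all: local polynomial estimators are boundary-adaptive. For $\bx$ near $\partial[-1,1]^d$ the window $B(\bx,h)\cap[-1,1]^d$ still contains a fixed fraction (at least $2^{-d}$) of the ball, so together with the uniform lower bound on the density the smallest eigenvalue of the rescaled Gram matrix remains bounded away from zero uniformly over $\bx\in\cX$ --- this is precisely the role of the regular-support condition in the strong density assumption of \citet{tsybakov2007fast}. Consequently the polynomial-reproduction property and the pointwise MSE bound $\Expf(\hat\eta(\bx)-\eta(\bx))^2\lesssim n^{-2\beta/(2\beta+d)}$ hold for \emph{all} $\bx$ in the support, and the $L_1$ bound follows by Jensen and integration without splitting off a boundary region. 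With that correction your proof goes through.
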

The conclusion from the above result, in our context, is that under nonparametric assumptions, one can guarantee that both $\hat{\Delta}_{\loss}$ and $\hat{\Delta}_{\constr}$ scale as $n^{-\frac{\beta}{2\beta + d}}$, where $n$ denotes the size of the labeled dataset used to construct the initial estimators.

\section{Proposed algorithm}
\label{sec:algorithm}
\begin{algorithm}[t]
\caption{$\texttt{COPT}(T, \beta, K, \class{A}, \ell, \bc, \bB)$}\label{alg:main}
\begin{algorithmic}[1]
    \STATE {\bfseries Input:} number of stochastic gradient evaluations $T \geq 1$; regularization $\beta > 0$; number of classes $K \geq 2$; set of possible predictions $\class{A}$; loss estimator $\hat{\ell}(\cdot, \cdot)$; constraint estimators $\hat{C_j}(\cdot, \cdot)$.
   \STATE { Set} parameters: $\sigma^2 \geq \Exp_\bX \pqnormin{1}{2}{\hat{\bC}(\bX)}$, $L=2\beta\sigma^2$\;
    \STATE { Set} $\blambda \mapsto \hat{F}(\blambda)$ as defined in Eq.~\eqref{eq:hatF}\;
   \STATE Run a black-box optimizer $\class{O}(\hat{F}, \sigma^2, L, T)$ on function $\hat{F}$ having access to $T$ stochastic gradient evaluations (see~\eqref{eq:stoch_grad}) with variance $\sigma^2$ and smoothness parameter $L$ to obtain $\hat\blambda_T$\;
   \RETURN $\hat{\pi}_{\hat\blambda_T}(\cdot \mid \cdot)$ as defined in~\eqref{eq:hatPi}\;
\end{algorithmic}
\end{algorithm}
In this section we present details on the family of proposed algorithms. Namely, given any stochastic first order optimization algorithm, we turn it into a post-processing algorithm for the considered constraint multi-class classification problem.
As it was mentioned in Section~\ref{subsec:discussion_on_methodo}, our goal is to instantiate a stochastic optimization algorithm that is tailored for minimization of the norm of the gradient-mapping.

First of all, we need to provide the explicit expression of the gradient of $\hat{F}$.
\begin{align}
    \label{eq:true_grad}
    \nabla\hat{F}(\blambda) = - \Exp_\bX\left[\hat{\bC}(\bX)\bsigma\left(\beta\left(
    -\hat{\bL}(\bX) - \hat{\bC}(\bX)^\top\blambda\right)\right)\right] \enspace.
\end{align}
A stochastic gradient for $\hat{F}$ can be obtained by sampling independently from the past a new point $\bX \sim \Prob_{\bX}$. Namely, for a sample $\bX \sim \Prob_\bX$ and a point $\blambda = (\lambda_j)_{j\in[M]}$, we set
\begin{align}
    \label{eq:stoch_grad}
    \bg(\blambda; \bX) = -\hat{\bC}(\bX)\bsigma\left(\beta\left(
    -\hat{\bL}(\bX) - \hat{\bC}(\bX)^\top\blambda\right)\right) \enspace.
\end{align}
The following relation is then ensured
\begin{align*}
    \nabla\hat{F}(\blambda) = \Exp_{\bX \sim \Prob_{\bX}}[\bg(\blambda; \bX)]\enspace,
\end{align*}
implying that $\bg(\blambda; \bX)$ is an unbiased estimator of $\nabla\hat{F}(\blambda)$. Secondly, as it is common for many (stochastic) first order optimization algorithms, we need to establish a control of the variance of $\bg(\cdot; \bX)$ and the regularity properties of $\hat{F}$.
We state the following two results that accomplish this goal and will allow us to instantiate some black-box stochastic optimization algorithm.
\begin{lemma}[{Variance}]
    \label{lemma:sigma-bd}
    It holds that $\Exp_\bX \normin{g(\blambda; \bX) - \nabla \hat{F} (\blambda)}^2 \leq \Exp_\bX \pqnormin{1}{2}{\hat{\bC}(\bX)}^2$.
\end{lemma}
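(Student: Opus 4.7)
The plan is to reduce the variance bound to a second-moment bound, and then exploit the fact that the softmax produces a probability vector to apply the definition of the $\ell_1 \to \ell_2$ subordinate norm to $\hat{\bC}(\bX)$.

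First, recall the elementary identity that for any square-integrable random vector $\bV$, $\Exp\|\bV - \Exp\bV\|^2 \leq \Exp\|\bV\|^2$. Applying this to $\bV = \bg(\blambda; \bX)$ and using that $\Exp_\bX \bg(\blambda; \bX) = \nabla \hat{F}(\blambda)$ by construction of the stochastic gradient in~\eqref{eq:stoch_grad}, we obtain
\begin{align*}
    \Exp_\bX \|\bg(\blambda; \bX) - \nabla \hat{F}(\blambda)\|^2 \leq \Exp_\bX \|\bg(\blambda; \bX)\|^2 \enspace.
\end{align*}

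Next, I would bound the integrand pointwise. Denote $\bs \eqdef \bsigma\bigl(\beta(-\hat{\bL}(\bX) - \hat{\bC}(\bX)^\top\blambda)\bigr) \in \bbR^{|\cA|}$. Since $\bs$ is the output of the softmax, it is a probability vector; in particular $\bs \geq 0$ and $\|\bs\|_1 = 1$. By definition of the matrix norm $\pqnormin{1}{2}{\cdot}$, we have $\|\hat{\bC}(\bX) \bs\|_2 \leq \pqnormin{1}{2}{\hat{\bC}(\bX)} \cdot \|\bs\|_1 = \pqnormin{1}{2}{\hat{\bC}(\bX)}$, so that
\begin{align*}
    \|\bg(\blambda; \bX)\|^2 = \|\hat{\bC}(\bX) \bs\|_2^2 \leq \pqnormin{1}{2}{\hat{\bC}(\bX)}^2 \enspace.
\end{align*}
Taking expectation with respect to $\bX$ and chaining with the variance inequality concludes the proof.

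There is no real obstacle here: the only subtle point is to choose the right subordinate norm. The key structural fact is that stochastic gradients factor as (constraint matrix) times (softmax-probability vector), which naturally pairs the $\ell_1$ bound on the probability vector with the $1 \to 2$ operator norm on $\hat{\bC}(\bX)$. This choice gives a tight bound and matches the quantity that will appear in the smoothness Lemma used downstream.
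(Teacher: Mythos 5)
Your proof is correct and follows essentially the same route as the paper's: both first drop to the second moment via $\Exp\|\bV-\Exp\bV\|^2\leq\Exp\|\bV\|^2$, then bound $\|\hat{\bC}(\bX)\bs\|_2$ by $\pqnormin{1}{2}{\hat{\bC}(\bX)}$ using that the softmax output has unit $\ell_1$-norm. Your write-up simply makes explicit the steps the paper leaves implicit.
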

\begin{proof}[\textbf{Proof of Lemma}~\ref{lemma:sigma-bd}]
From the expressions of $g(\cdot; \bX)$ and $\nabla \hat{F}(\cdot)$ and using the previous relation $\Exp_{\bX}[g(\cdot; \bX)] = \nabla \hat{F}(\cdot)$, we deduce
\begin{align*}
    &\Exp_\bX \normin{g({\blambda} ;\bX) - \nabla \hat{F} (\blambda)}^2
    \leq\Exp_\bX \normin{\hat{\bC}(\bX)\bsigma\big(\beta\big(-\hat{\bL}(\bX) - \hat{\bC}(\bX)^\top\blambda\big)\big)}^2 \leq \Exp_\bX \pqnormin{1}{2}{\hat{\bC}(\bX)}^2 \enspace,
\end{align*}
where the last inequality follows from the definition of $\pqnorm{1}{2}{\cdot}$ and the fact that $\norm{\bsigma(\cdot)}_1=1$.
The proof is concluded.
\end{proof}

\begin{lemma}[{Regularity of $\hat{F}$}]
    \label{lemma:L-bd}
    The function $\blambda \mapsto \hat{F}(\blambda)$ is convex and its gradient is $(2\beta\Exp_\bX \pqnormin{1}{2}{\hat{\bC}(\bX)}^2)$-Lipschitz.
\end{lemma}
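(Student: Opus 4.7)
\textbf{Proof proposal for Lemma~\ref{lemma:L-bd}.} The plan is to leverage the fact that $\hat{F}$ is built from $\lse$ composed with an affine map in $\blambda$ and then averaged over $\bX$, so both convexity and smoothness reduce to pointwise statements about $\lse$.

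\textbf{Convexity.} Recall that $\lse$ is convex on $\bbR^{|\cA|}$: a direct computation (or a standard reference) gives $\nabla^2 \lse(\bw) = \beta\bigl(\diag(\bsigma(\beta \bw)) - \bsigma(\beta \bw)\bsigma(\beta \bw)^\top\bigr) \succeq 0$, since for any vector $\bu$ the quadratic form equals $\beta \Var_{\bsigma(\beta\bw)}(\bu) \geq 0$. For each fixed $\bx$, $\blambda \mapsto -\hat{\bL}(\bx) - \hat{\bC}(\bx)^\top \blambda$ is affine, so $\blambda \mapsto \lse(-\hat{\bL}(\bx) - \hat{\bC}(\bx)^\top \blambda)$ is convex, and convexity is preserved under taking expectations with respect to $\bX$.

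\textbf{Gradient Lipschitzness.} I would compute the Hessian of $\blambda \mapsto \lse(-\hat{\bL}(\bx) - \hat{\bC}(\bx)^\top \blambda)$ by the chain rule. Writing $\bu = -\hat{\bL}(\bx) - \hat{\bC}(\bx)^\top \blambda$ and $\bsigma = \bsigma(\beta \bu)$, this Hessian equals
\begin{align*}
    \bH(\bx,\blambda) = \beta\,\hat{\bC}(\bx)\bigl(\diag(\bsigma) - \bsigma\bsigma^\top\bigr)\hat{\bC}(\bx)^\top \in \bbR^{M\times M}\enspace.
\end{align*}
The goal is to show $\|\bH(\bx,\blambda)\|_{op} \leq 2\beta\,\pqnormin{1}{2}{\hat{\bC}(\bx)}^2$. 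I would split via the triangle inequality into the two PSD contributions $\hat{\bC}(\bx)\diag(\bsigma)\hat{\bC}(\bx)^\top$ and $\hat{\bC}(\bx)\bsigma\bsigma^\top\hat{\bC}(\bx)^\top$. For the first, any unit $\bv$ gives
\begin{align*}
    \bv^\top \hat{\bC}(\bx)\diag(\bsigma)\hat{\bC}(\bx)^\top \bv = \sum_{a \in \cA}\sigma_a \bigl(\bv^\top \hat{\bC}(\bx)_{\cdot,a}\bigr)^2 \leq \max_{a}\|\hat{\bC}(\bx)_{\cdot,a}\|_2^2 = \pqnormin{1}{2}{\hat{\bC}(\bx)}^2\enspace,
\end{align*}
using Cauchy--Schwarz, $\sum_a \sigma_a = 1$, and the identification of the $\|\cdot\|_{1\to 2}$ norm with the maximum column $\ell_2$ norm. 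For the rank-one term, $\|\hat{\bC}(\bx)\bsigma\|_2 \leq \pqnormin{1}{2}{\hat{\bC}(\bx)}\|\bsigma\|_1 = \pqnormin{1}{2}{\hat{\bC}(\bx)}$, so its operator norm is again bounded by $\pqnormin{1}{2}{\hat{\bC}(\bx)}^2$. Summing yields the pointwise bound.

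\textbf{Passing to the expectation.} Differentiating under the integral (justified since the integrand is a smooth convex function with gradient uniformly bounded by $\pqnormin{1}{2}{\hat{\bC}(\bx)}$ via the same softmax argument as in Lemma~\ref{lemma:sigma-bd}), one has $\nabla^2 \hat{F}(\blambda) = \Exp_\bX[\bH(\bX,\blambda)]$. By Jensen applied to the convex operator norm,
\begin{align*}
    \|\nabla^2 \hat{F}(\blambda)\|_{op} \leq \Exp_\bX\|\bH(\bX,\blambda)\|_{op} \leq 2\beta\,\Exp_\bX \pqnormin{1}{2}{\hat{\bC}(\bX)}^2\enspace,
\end{align*}
which is the claimed Lipschitz constant for $\nabla \hat{F}$. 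The main obstacle is the careful identification of the right matrix norm: the factor $2$ comes from the crude triangle-inequality split of $\diag(\bsigma) - \bsigma\bsigma^\top$, and the nontrivial step is recognizing that the natural norm for both pieces is the $1 \to 2$ subordinate norm, so that the resulting bound matches the variance control used in Lemma~\ref{lemma:sigma-bd}.
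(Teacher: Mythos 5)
Your proposal is correct and follows essentially the same route as the paper: convexity via affine composition with $\lse$ plus expectation, and the Lipschitz constant via the chain-rule Hessian $\beta\,\hat{\bC}(\bx)\bigl(\diag(\bsigma)-\bsigma\bsigma^\top\bigr)\hat{\bC}(\bx)^\top$, split by the triangle inequality into the two PSD pieces each bounded by $\pqnormin{1}{2}{\hat{\bC}(\bx)}^2$. The only difference is that you prove the matrix-norm bounds inline, whereas the paper delegates them to its auxiliary Lemmas~\ref{lemma:normHessian} and~\ref{lemma:norm-12}.
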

\begin{proof}[\textbf{Proof of Lemma}~\ref{lemma:L-bd}]
First, let us recall the expression of $\hat{F}(\blambda)$, given as
\begin{align*}
    \hat{F}(\blambda) = \Exp_\bX\left[\lse\left(
    -\hat{\bL}(\bX) - \hat{\bC}(\bX)^\top\blambda\right)\right] \enspace.
\end{align*}
To show convexity, we first note that $\lse(\cdot)$ is convex. Then, since convexity is preserved under affine transformations, then the mapping $\blambda \mapsto \lse\left(
    -\hat{\bL}(\bx) - \hat{\bC}(\bx)^\top\blambda\right)$ is convex for all $\bx$. Thus, its expectation is convex as well, showing that $\hat{F}$ is convex.

Using the chain rule, we derive the expression for the Hessian of $\hat{F}$, given as
\begin{align*}
    \nabla^2 \hat{F}(\blambda) &= \Exp_\bX\left[(-\hat{\bC}(\bX)) \nabla^2 \lse(-\hat{\bC}(\bX)^\top\blambda - \hat{\bL}(\bX))(-\hat{\bC}(\bX)^\top)\right]\enspace.
\end{align*}
To show the Lipschitzness, it is sufficient to provide an upper bound on the operator norm of $\nabla^2 F(\blambda)$. From Lemma~\ref{lemma:lse} we have that $\nabla^2 \lse(\ba) = \beta(\diag(\bsigma(\beta\ba))-\bsigma(\beta\ba)\bsigma(\beta\ba)^\top)$ for any $\ba\in\bbR^{|\cA|}$. Combining this with Lemma~\ref{lemma:normHessian} and Lemma~\ref{lemma:norm-12} provided in appendix, we obtain
\begin{align*}
    \opnormin{\nabla^2 \hat{F}(\blambda)} \leq 2\beta \Exp_\bX \pqnormin{1}{2}{\hat{\bC}(\bX)}^2\enspace.
\end{align*}
The proof is concluded.
\end{proof}
The previous two lemmas are derived for optimization purposes rather than statistical ones. Explicit bounds on the variance of the stochastic gradient and conditions on the regularity of the objective function are crucial for establishing the convergence rate of an optimization algorithm. These results enable the use of any stochastic optimization method designed to minimize the norm of the gradient mapping. For this reason, we present them as standalone results, allowing for potential use in future research.

That being said, to make our analysis complete, the next section introduces a concrete optimization algorithm. This algorithm was originally developed by~\cite{allenzhu2021make}, later refined by~\cite{foster2019complexity}, and further adapted to meet the specific needs of this work by~\cite{NEURIPS2024_d5c3ecf3}.

\subsection{Black-box stochastic optimization reduction}
In this section, we show how black-box stochastic optimization guarantees translate to statistical guarantees for our problem. We start with establishing the notion of a black-box algorithm.
Let $f : \bbR^M_+ \times \cX \to \bbR$ and
$P$ be some probability distribution on $\cX$; define
\begin{align}
    \label{eq:optim_general}
    F(\blambda) \eqdef
    \Exp_{P}\left[ f(\blambda, \bX)\right]\enspace.
\end{align}
We consider the following class of functions and distributions $P$:
\begin{definition}
    \label{def:optim_class}
    For some $L > 0$, $\sigma > 0$, let $\cF_{L, \sigma_2}$ stand for a class of functions $F$ written in the form~\eqref{eq:optim_general}, such that
    \begin{enumerate}
        \item $\blambda \mapsto f(\blambda, \bx)$ is convex for every $\bx \in \cX$;
        \item $\|\nabla F(\blambda) - \nabla F(\blambda')\| \leq L \|\blambda - \blambda'\| $ for $\blambda, \blambda' \in \bbR^M_+$;
        \item $\displaystyle\sup_{\blambda \in \bbR^M_+} \Exp_{P}
        [\|\nabla_{\blambda}f(\blambda, \bX) - \nabla F(\blambda)\|^2] \leq \sigma^2$.
    \end{enumerate}
\end{definition}
First of all, observe that thanks to Lemma~\ref{lemma:sigma-bd} and~\ref{lemma:L-bd}, the function $\blambda \mapsto \hat{F}(\blambda)$, defined in~\eqref{eq:hatF}, belongs to $\cF_{L, \sigma_2}$, with
\begin{align*}
    f(\blambda, \bx) = \lse\big(
        -\hat{\bL}(\bx) - \hat{\bC}(\bx)^\top\blambda\big)\, , \quad
        \sigma^2 = \Exp_\bX \pqnormin{1}{2}{\hat{\bC}(\bX)}^2 \, ,  \text{ and}  \quad
    L = 2\beta \sigma^2\,.
\end{align*}
Our goal is to take an arbitrary stochastic black-box algorithm that we define below and comment on the way it can be simulated.
\begin{definition}[A black box optimizer]
    \label{def:black_box}
   For $L, \sigma^2 > 0$, $F \in \cF_{L, \sigma_2}$ and $T \geq 1$, a black box optimizer $\cO(F, \sigma^2, L, T)$ is any procedure that, on a basis of $\bX_1, \ldots, \bX_T$ \iid samples from $P$,
   returns $\hat{\blambda}_T$ satisfying
   \begin{align*}
        \mathbf{E}\bigg\|\frac{\hat{\blambda}_T- \big(\hat{\blambda}_T-\alpha\nabla {F}\big(\hat{\blambda}_T\big)\big)_{+}}{\alpha}\bigg\|^2 \leq \Psi^2_{\alpha}(T, \sigma^2, L)\,\,,
    \end{align*}
    for some $\alpha > 0$ where $\Psi^2_{\alpha}(T, \sigma^2, L)$ is some black-box quantity that controls the norm of the gradient mapping.
\end{definition}

The following corollary is an immediate consequence of Lemma~\ref{lemma:hat_grad_map_stat}, Cauchy-Schwartz inequality and the mere definition of the black-box otpimizer above.
\begin{corollary}
    \label{cor:main}
    Algorithm~\ref{alg:main}, with an arbitrary black-box optimizer $\cO$ from Definition~\ref{def:black_box} with $\sigma^2 = \Exp_{\bX}\pqnormin{1}{2}{\hat{\bC}(\bX)}^2$ and $L = 2\beta\sigma^2$, satisfies
    \begin{align*}
    \Expf\sqrt{\sum_{j \in [M]}\left(\cC_j(\hat{\pi}_{\hat{\blambda}_T})\right)_+^2} \leq \Psi_{\alpha}(T, \sigma^2, L) + \hat{\Delta}_{\constr}\enspace.
\end{align*}
Furthermore, for any $\pi$ that is feasible for~\eqref{eq:optimal}
\begin{align*}
    \Expf[\mathcal{R}(\hat{\pi}_{\hat{\blambda}_T})]
    \leq \risk(\pi) + &\sqrt{\Expf\big(\|\hat{\blambda}\| + \alpha\sigma\big)^2} \Psi_{\alpha}(T, \sigma^2, L)\\
    &+ 2\Expf[\hat{\Delta}_{\loss}] + \Expf[\hat{\Delta}_{\constr}\normin{\hat{\blambda}_T}] +  \frac{2\log(|\cA|)}{\beta} \enspace.
\end{align*}
\end{corollary}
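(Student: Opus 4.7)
The plan is a near-mechanical reduction from Lemma~\ref{lemma:hat_grad_map_stat} to the black-box convergence guarantee of Definition~\ref{def:black_box}, glued together by Jensen's inequality and Cauchy--Schwarz. First I would verify that the objective $\hat{F}$ in line~3 of Algorithm~\ref{alg:main} lies in the class $\cF_{L,\sigma^2}$ for the parameters prescribed in line~2: convexity and $L$-smoothness with $L=2\beta\sigma^2$ are exactly Lemma~\ref{lemma:L-bd}, and the stochastic-gradient variance bound, valid because $\sigma^2 \geq \Exp_\bX\pqnormin{1}{2}{\hat{\bC}(\bX)}^2$, is Lemma~\ref{lemma:sigma-bd}. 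Consequently the output $\hat{\blambda}_T$ of the black-box optimizer satisfies $\Expf\normin{\hat{\bG}_\alpha(\hat{\blambda}_T)}^2 \leq \Psi_\alpha^2(T,\sigma^2,L)$ by Definition~\ref{def:black_box}.

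For the constraint-violation bound I would instantiate the first inequality of Lemma~\ref{lemma:hat_grad_map_stat} at the random point $\blambda=\hat{\blambda}_T$, take the full expectation, and use Jensen for the concave map $\sqrt{\cdot}$ to obtain $\Expf\normin{\hat{\bG}_\alpha(\hat{\blambda}_T)} \leq \sqrt{\Expf\normin{\hat{\bG}_\alpha(\hat{\blambda}_T)}^2} \leq \Psi_\alpha(T,\sigma^2,L)$; this yields the first display of the corollary directly. For the risk bound I would plug $\blambda=\hat{\blambda}_T$ into the second inequality of Lemma~\ref{lemma:hat_grad_map_stat}. The deterministic coefficient $\alpha\,\Exp_\bX\pqnormin{1}{2}{\hat{\bC}(\bX)}$ is bounded by $\alpha\sigma$ via Jensen since $\sigma^2$ dominates its second moment. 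The only non-trivial cross-term is $(\norm{\hat{\blambda}_T}+\alpha\sigma)\,\normin{\hat{\bG}_\alpha(\hat{\blambda}_T)}$, which Cauchy--Schwarz in expectation splits as $\sqrt{\Expf(\norm{\hat{\blambda}_T}+\alpha\sigma)^2}\,\cdot\,\sqrt{\Expf\normin{\hat{\bG}_\alpha(\hat{\blambda}_T)}^2}$, and the second factor is again at most $\Psi_\alpha(T,\sigma^2,L)$. The remaining summands $2\hat{\Delta}_{\loss}$, $\hat{\Delta}_{\constr}\normin{\hat{\blambda}_T}$, and $2\log(|\cA|)/\beta$ are carried through by linearity of expectation, producing the bound stated.

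The only subtle point, which I would flag explicitly rather than struggle with, is bookkeeping of randomness: $\hat{\bL}$ and $\hat{\bC}$ (and therefore $\hat{\Delta}_{\loss}$, $\hat{\Delta}_{\constr}$, $\sigma^2$ and $L$) are random, as are the i.i.d.\ samples used to form $\hat{\blambda}_T$. The clean way to handle this is to read $\Expf$ as the joint expectation over both sources, observe that Lemma~\ref{lemma:hat_grad_map_stat} is valid conditionally on $(\hat{\bL},\hat{\bC})$ for every deterministic $\blambda$, and then appeal to the tower property together with the black-box guarantee applied conditionally on $(\hat{\bL},\hat{\bC})$ (so that $\sigma^2,L$ become admissible constants in each realisation). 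Beyond this, no real analytic obstacle arises; the corollary is essentially a packaging of the material already established in Sections~\ref{sec:methodology} and~\ref{sec:algorithm}.
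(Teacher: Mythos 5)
Your proposal is correct and follows exactly the route the paper intends: the paper itself presents Corollary~\ref{cor:main} as an immediate consequence of Lemma~\ref{lemma:hat_grad_map_stat}, the Cauchy--Schwarz inequality, and the definition of the black-box optimizer, which is precisely the Jensen/Cauchy--Schwarz packaging you carry out (including the verification via Lemmas~\ref{lemma:sigma-bd} and~\ref{lemma:L-bd} that $\hat{F}$ lies in $\cF_{L,\sigma_2}$). Your remark on conditioning on $(\hat{\bL},\hat{\bC})$ is a reasonable clarification of bookkeeping the paper leaves implicit, but it does not change the argument.
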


\paragraph{Concrete example of an algorithm and explicit post-processing rates.}
To instantiate Corollary~\ref{cor:main}, we require an example of a stochastic optimization algorithm for which the quantity $\Psi_{\alpha}(T, \sigma^2, L)$ can be made sufficiently small. This challenge, in the context of convex problems, has been previously addressed by~\cite{allenzhu2021make} and~\cite{foster2019complexity}, who study the \texttt{SGD3} algorithm and its variant tailored specifically for minimizing the norm of the gradient of a convex function.
Below, we present a version of a result from~\citet[Theorem C.3]{NEURIPS2024_d5c3ecf3}, who, by combining insights from~\cite{allenzhu2021make} and~\cite{foster2019complexity}, provided an adaptations of the proof for the \texttt{SGD3} algorithm that are well suited to our setting.
\begin{theorem}
\label{thm:grad-sq:AC-SA}
Let Assumption~\ref{def:optim_class} be satisfied and set $\blambda^{\star} \in \argmin_{\blambda\in \bbR^M_+}F(\blambda)$ and $\blambda_0 \in \bbR^M_+$ be an initialization. Let also $\mu\in(0,L]$ and $T>4\sqrt{\frac{L}{\mu}}\floor*{\log_2\frac{L}{\mu}}$. Then for $\alpha=\tfrac{1}{2^{J+2}\mu}$, with $J=\floor*{\log_2\frac{L}{\mu}}$, $\texttt{SGD3}(F,\blambda_0,\mu,L,T)$ with $\texttt{AC-SA}$ outputs $\hat\blambda$ that satisfies, for some absolute constant $C > 0$
\begin{align*}
    \Psi_{\alpha}^2(T, \sigma^2, L)
    \leq C\left\{\left(\frac{L \mu}{T^2}\log_2^3\frac{L}{\mu} + \mu^2\right)\norm{\blambda_0-\blambda^*}^2 + \frac{\sigma^2}{T}\log_2^3\frac{L}{\mu}\right\} \enspace.
\end{align*}
\end{theorem}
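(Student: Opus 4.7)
Since the statement is presented as borrowed from \citet{NEURIPS2024_d5c3ecf3} and its underlying analysis comes from \cite{allenzhu2021make, foster2019complexity}, my strategy is to outline how the \texttt{SGD3} template, combined with the \texttt{AC-SA} solver of Ghadimi--Lan, yields the claimed bound, rather than to redo the full bookkeeping. The high-level idea is that \texttt{SGD3} is a multi-stage scheme that runs $J = \floor*{\log_2(L/\mu)}$ rounds of \texttt{AC-SA} on a sequence of quadratically-regularized subproblems $F(\cdot) + \tfrac{\mu_j}{2}\|\cdot - \blambda_{j-1}\|^2$ with doubling regularization $\mu_j = 2^j\mu$ and warm-start from the previous stage. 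Each subproblem is $(L+\mu_j)$-smooth and $\mu_j$-strongly convex, so the standard \texttt{AC-SA} guarantee gives an expected suboptimality after $T_j \asymp T/J$ stochastic gradient calls bounded by a bias term of order $(L/T_j^2)\|\blambda_{j-1}-\blambda_j^\star\|^2$ plus a variance term of order $\sigma^2/(\mu_j T_j)$, and strong convexity converts these gaps into distance bounds.

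Next, I would propagate these per-stage bounds through the stages. By telescoping the successive contractions, the final iterate $\hat\blambda$ lies within controlled distance of the exact proximal point of $F$ at scale $\alpha = 1/(2^{J+2}\mu)$, which is precisely the value of $\alpha$ appearing in the definition of $\bG_\alpha$. The bias contributions, summed geometrically over the $J$ stages with the doubling schedule, produce the $(L\mu/T^2)\log_2^3(L/\mu)\cdot\|\blambda_0-\blambda^\star\|^2$ term; the remaining strong-convexity floor from the smallest regularizer gives the $\mu^2\|\blambda_0-\blambda^\star\|^2$ term; and the variance contributions aggregate into $(\sigma^2/T)\log_2^3(L/\mu)$. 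The three logarithmic factors arise, as in \cite{allenzhu2021make}, from one factor per stage count, one from the sum of harmonic-type weights across stages, and one from the conversion between function gap, distance, and gradient-mapping norms.

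The final step is the standard proximal-to-gradient-mapping conversion: if $\blambda^+ = \mathrm{prox}_{\alpha F}(\hat\blambda)$ (restricted to the orthant $\bbR^M_+$), then by non-expansiveness of the projected proximal operator combined with Lipschitzness of $\nabla F$, one has $\|\bG_\alpha(\hat\blambda)\| \lesssim \alpha^{-1}\|\hat\blambda - \blambda^+\|$; squaring and taking expectations yields the stated inequality. The hypothesis $T > 4\sqrt{L/\mu}\floor*{\log_2(L/\mu)}$ is exactly what is needed for \texttt{AC-SA} to be run with at least one useful iteration per stage in a meaningful acceleration regime.

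The main technical obstacle is the compounding of the per-stage errors: each stage injects both a bias scaling with the (unknown) distance to its own shifted optimum and a variance term scaling with $1/\mu_j$, and showing that the schedule $\mu_j = 2^j\mu$ together with balanced $T_j \approx T/J$ produces a geometric---rather than divergent---summation requires careful accounting of the warm-start distances $\|\blambda_{j-1}-\blambda_j^\star\|$ via the strong-convexity-based contraction. All of this is done explicitly in \citet[Theorem~C.3]{NEURIPS2024_d5c3ecf3}, so in practice the proof reduces to verifying that $\hat F \in \cF_{L,\sigma^2}$ with $L = 2\beta\Exp_\bX\pqnormin{1}{2}{\hat\bC(\bX)}^2$ and $\sigma^2 = \Exp_\bX\pqnormin{1}{2}{\hat\bC(\bX)}^2$, which is exactly the content of Lemmas~\ref{lemma:sigma-bd} and~\ref{lemma:L-bd}, and then invoking their theorem verbatim.
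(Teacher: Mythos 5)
The paper does not prove this theorem itself; it imports it verbatim from \citet[Theorem C.3]{NEURIPS2024_d5c3ecf3}, exactly as you observe in your final paragraph, so your proposal takes essentially the same route as the paper. Your accompanying sketch of the underlying \texttt{SGD3}/\texttt{AC-SA} analysis (doubling regularization $\mu_j = 2^j\mu$, per-stage strongly convex solves with $T/J$ stochastic gradients, telescoping of the warm-start distances, and the final proximal-to-gradient-mapping conversion at scale $\alpha = 1/(2^{J+2}\mu)$) is a faithful account of how that cited result is established.
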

It remains to pick a parameter $\mu > 0$ and present the final form of the bound for $F = \hat{F}$ as defined in~\eqref{eq:hatF}.
According to Lemma~\ref{lemma:sigma-bd} and Lemma~\ref{lemma:L-bd} we have that $\sigma^2 = \Exp_\bX \pqnormin{1}{2}{\hat{\bC}(\bX)}^2$ and $L=2\beta\sigma^2$.
Thus, setting \[\beta=\frac{T}{8\log_2 T}\,\text{ and }\, \mu= \frac{2 \sigma^2}{\beta} \,\text{ ensures that }\, \mu \leq L \,\text{ and that }\, T > 4\sqrt{\tfrac{L}{\mu}}\floor*{\log_2\tfrac{L}{\mu}} \text{ for } T\geq2\,.\] We conclude that for $T$ larger than some constant, the conditions of Theorem~\ref{thm:grad-sq:AC-SA} are satisfied for the function $\hat{F}$ and we deduce
\begin{align}
    \label{eq:grad_map_stat_proof}
    \Psi_{\alpha}^2(T, \sigma^2, L) \leq \mathcal{\tilde{O}}\left(\frac{\sigma^2}{T}\left( 1 + \frac{\sigma^2}{T} \normin{\tilde{\blambda}}^2\right)\right) \,\text{ where }\, \tilde{\blambda} \in \argmin_{\blambda \geq 0} \hat{F}(\blambda)\enspace.
\end{align}
Combining the above with Corollary~\ref{cor:main}, allows us to conclude that, given $T$ unlabeled samples from $\Prob_\bX$, \texttt{COPT} algorithm~\ref{alg:main} with \texttt{SGD3} algorithm~\ref{alg:SGD3-ref_main} and $\beta = \widetilde{\cO}(T)$ enjoys
\begin{align*}
    \Expf\sqrt{\sum_{j \in [M]}\left(\cC_j(\hat{\pi}_{\hat{\blambda}_T})\right)_+^2} \leq \hat{\Delta}_{\constr} + \mathcal{\tilde{O}}\left(\frac{\sigma}{\sqrt{T}}\left( 1 + \frac{\sigma}{\sqrt{T}} \normin{\tilde{\blambda}}\right)\right)\enspace.
\end{align*}
Furthermore, for any $\pi$ that is feasible for~\eqref{eq:optimal}
\begin{align*}
    \Expf[\mathcal{R}(\hat{\pi}_{\hat{\blambda}_T})]
    \leq \risk(\pi) + &\sqrt{\Expf\big(\normin{\hat{\blambda}_T} + \alpha\sigma\big)^2} \cdot \mathcal{\tilde{O}}\left(\frac{\sigma}{\sqrt{T}}\left( 1 + \frac{\sigma}{\sqrt{T}} \normin{\tilde{\blambda}}\right)\right)\\
    &+ 2\Expf[\hat{\Delta}_{\loss}] + \Expf[\hat{\Delta}_{\constr}\normin{\hat{\blambda}_T}] +  \mathcal{\tilde{O}}\left(\frac{2\log(|\cA|)}{T}\right) \enspace.
\end{align*}
One of the key attractive features of this bound is the high value of $\beta$, which leads to randomized yet highly concentrated classifiers that retain finite-sample guarantees. For this reason, we believe that the introduced randomization is not particularly restrictive in practice.
We also emphasize that the bound holds under virtually no assumptions. Naturally, the quality of $\hat{\Delta}_{\loss}$ and $\hat{\Delta}_{\constr}$ will depend on the regularity of the underlying distribution, but this concern lies outside the scope of the post-processing procedure itself.

\begin{algorithm}[t]
\caption{$\texttt{AC-SA}(F,\blambda_0,\mu, L, T)$}\label{alg:ACSA_main}
\begin{algorithmic}[1]
   \STATE {\bfseries Input:} function $F$; initial vector $\blambda_0$; parameters $\mu, L \geq 0$; number of iterations $T\geq1$\\
   \STATE $\blambda_0^{\textrm{ag}}=\blambda_0$
   \FOR{$t=1$ {\bfseries to} $T$}
   \STATE sample new $\bX \sim \Prob_{\bX}$, independently from the past
   \STATE $\alpha_t \leftarrow \frac{2}{t+1}$
   \STATE $\gamma_t \leftarrow \frac{4L}{t(t+1)}$
   \STATE $\blambda_t^{\textrm{md}} \leftarrow \frac{(1-\alpha_t)(\mu+\gamma_t)}{\gamma_t+(1-\alpha_t^2)\mu}\blambda_{t-1}^{\textrm{ag}} + \frac{\alpha_t((1-\alpha_t)\mu+\gamma_t)}{\gamma_t+(1-\alpha_t^2)\mu}\blambda_{t-1}$
    \STATE $\blambda_{t} \leftarrow \left(\frac{(1-\alpha_t)\mu+\gamma_t}{\mu+\gamma_t}\blambda_{t-1} + \frac{\alpha_t\mu}{\mu+\gamma_t} \blambda_t^{\textrm{md}}-\frac{\alpha_t}{\mu+\gamma_t}\nabla_{\blambda} f(\blambda_t^{\textrm{md}}, \bX)\right)_+$
   \STATE $\blambda_t^{\textrm{ag}} \leftarrow \alpha_t \blambda_t + (1-\alpha_t)\blambda_{t-1}^{\textrm{ag}}$
   \ENDFOR
   \RETURN $\blambda_t^{\textrm{ag}}$
\end{algorithmic}
\end{algorithm}

\begin{algorithm}[t]
\caption{$\texttt{SGD3}(F,\blambda_0,\mu,L,T)$}\label{alg:SGD3-ref_main}
\begin{algorithmic}[1]
   \STATE {\bfseries Input:} function $F$; initial vector $\blambda_0$; parameters $0<\mu\leq L$; number of iterations $T\geq\Omega\left(\frac{L}{\mu}\log_2\frac{L}{\mu}\right)$
   \STATE $F^{(0)}(\blambda) \leftarrow F(\blambda) + \frac{\mu}{2}\norm{\blambda-\blambda_0}^2; \hat \blambda_0 \leftarrow \blambda_0; \mu_0 \leftarrow \mu$
   \FOR{$j=1$ {\bfseries to} $J=\floor*{\log\frac{L}{\mu}}$}
   \STATE $\hat \blambda_j \leftarrow \texttt{AC-SA}(F^{(j-1)},\hat \blambda_{j-1}, \mu_{j-1}, 2(L+\mu), \frac{T}{J})$
   \STATE $\mu_j \leftarrow 2\mu_{j-1}$
   \STATE $F^{(j)}(\blambda) \eqdef F^{(j-1)}(\blambda) + \frac{\mu_j}{2}\normin{\blambda-\hat \blambda_j}^2$
   \ENDFOR
   \RETURN $\hat \blambda_J$
\end{algorithmic}
\end{algorithm}

\subsection{Related works}
\label{sub:related_works}
There are several existing works that offer a flexible algorithmic framework for post-processing a given classifier to satisfy some kind of additional constraints. Most notably, that of~\cite{celis2019classification} and~\cite{narasimhan2024consistent,narasimhan18a} have similar goals to our, but rely on different tools and provide different theoretical results that we discuss below.

\paragraph{Comparison to~\cite{celis2019classification}.} Theorem 3.2 in~\cite{celis2019classification} bears resemblance to our Lemma~\ref{lem:NP}, as it characterizes the form of the optimal classifier under certain fairness constraints. Interestingly, their result yields a non-randomized classifier without requiring any additional assumptions. This can be theoretically problematic: applying their framework to derive the classical Neyman–Pearson lemma would suggest that the optimal test is never randomized, which is generally false in the case of discrete distributions. Nevertheless, we believe this technical issue has limited practical implications.

From a methodological standpoint,~\cite{celis2019classification} advocate solving the optimization problem that defines $\blambda^\star$ (as in our Lemma~\ref{lem:NP}) and then applying the plug-in principle. This strategy is shared by several works, including~\citep{chzhen2020-discr,lei2014,zeng2022fair,hou2024finite,lei2016}. Notably,~\cite{celis2019classification} assume full access to the underlying distribution and do not provide finite-sample statistical guarantees. When such guarantees are derived in related literature, they typically rely on some form of continuity assumption, which enables bypassing randomized classifiers and developing a consistent theory.

Moreover, many of these methods assume access to an exact solution of the underlying convex optimization problem, whereas approximate solutions may be difficult to handle within their frameworks. In contrast, by explicitly allowing for randomized classifiers, we provide a fully data-driven and distribution-free approach, where each algorithmic step can be implemented in practice.

\paragraph{Comparison to~\cite{narasimhan2024consistent}.} They consider constrained multi-class classification problems where both the risk and constraints are formulated as functions of the confusion matrix, covering most of the examples we study. Their approach is based on plug-in algorithms that leverage a linear minimization oracle (e.g., cost-sensitive classification) and comes with optimization convergence guarantees for several procedures, including Frank-Wolfe, Gradient Descent-Ascent, Ellipsoid, and Bisection methods. However, their analysis focuses primarily on the optimization error of the resulting classifier and does not support stochastic variants of their algorithms. In contrast, we provide end-to-end convergence rates for our method, while preserving the same level of generality in the framework.

Methodologically, their approach relies heavily on the primal-dual structure of the problem, whereas our algorithm operates entirely in the dual domain, invoking a primal-dual link only at the final stage of the procedure. The trade-off for our approach is the randomized nature of the resulting classifier. We also note that both our method and that of~\cite{narasimhan2024consistent} produce randomized classifiers as outputs. However, in our case, the degree of randomization is explicitly controlled by the parameter $\beta > 0$, and for large values of $\beta$, the randomization becomes negligible. This level of control is not necessarily available in the approach proposed by~\cite{narasimhan2024consistent}.

Overall, we believe that our approach is complementary to those of~\cite{celis2019classification} and~\cite{narasimhan2024consistent}, with its main distinguishing advantage being that it is end-to-end theory-driven.

\section{Flexibility of the approach: extensions for set-valued classification}
Let us also discuss an interesting extension of our approach to set-valued classification.
In the standard formulation of set-valued classification, the goal is to construct a set-valued classifier $\Gamma : \cX \to 2^{[K]}$ that, for a given input $\bx \in \cX$, returns a subset $\Gamma(\bx) \subset [K]$ of potentially relevant classes. More formally, given a joint distribution $(\bX, Y) \sim \Prob$ over $\cX \times [K]$, two key performance metrics are typically associated with a given $\Gamma$: the {risk} and the {size}, defined as
\begin{align*}
    \cR(\Gamma) \eqdef \Prob(Y \notin \Gamma(\bX))\quad\text{and}\quad
    \cI(\Gamma) \eqdef \Exp |\Gamma(\bX)|\,,
\end{align*}
where $|\Gamma(\bx)|$ denotes the cardinality of the prediction set at input $\bx$.

Two main optimization paradigms have emerged in the literature: one focuses on {minimizing the risk under a constraint on the average size}~\citep{denis2020consistency}, while the other emphasizes {minimizing the average size under a constraint on the risk}~\citep{sadinle2019least}. These lead to the following two canonical optimization problems:
\begin{align}
    \label{eq:set_valued1}
    \min\{\cR(\Gamma)\,:\, \cI(\Gamma) \leq I\} \quad \text{and} \quad
    \min\{\cI(\Gamma)\,:\, \cR(\Gamma) \leq \alpha\}\,,
\end{align}
where $I \in (0, K]$ and $\alpha \in (0, 1)$ are parameters controlling, respectively, the expected size of the prediction set and the allowable miscoverage probability.

At first glance, this problem may appear to fall outside the scope of our framework. However, with a mild modification, it can be naturally incorporated. Specifically, consider a mapping $\bpi = (\pi_1, \ldots, \pi_K) : \cX \mapsto [0, 1]^K$. For any such $\bpi$, define $\Gamma^{\bpi}$ as a set-valued random variable satisfying:
\begin{align*}
\Prob^{\bpi}(y \in \Gamma^{\bpi} \mid \bX) = \pi_y(\bX)\quad\text{and}\quad \left(\Gamma^{\bpi} \independent Y\right) \mid \bX\quad\text{and}\quad(\bX, Y) \sim \Prob\,.
\end{align*}

As before, we denote by $\Prob^{\bpi}$ the joint distribution of $(\bX, Y, \Gamma^{\bpi})$. Since the prediction $\Gamma^{\bpi}$ is fully specified by the mapping $\bpi$, we slightly abuse notation and define the risk and size of $\bpi$ as follows:
\begin{align*}
\cR(\bpi) \eqdef \Prob^\bpi(Y \notin \Gamma^\bpi)\quad\text{and}\quad\cI(\bpi) = \Exp^\bpi |\Gamma^\bpi|.
\end{align*}

Introducing $p_y(\bx) = \Prob(Y = y \mid \bX = \bx)$, we observe that:
\begin{align}
\label{eq:set_valued2}
\cR(\bpi) \eqdef \Exp\left[\sum_{y \in [K]}p_y(\bX)(1 - \pi_y(\bX))\right]\quad\text{and}\quad\cI(\bpi) = \Exp\left[\sum_{y \in [K]}\pi_y(\bX)\right]\,.
\end{align}
The key insight here is that both the risk and size are now linear functionals of $\bpi$, allowing us to apply the same methodology developed earlier for standard classification. The only distinction is that previously we considered mappings $\pi : \cX \to \Delta([K])$, whereas we now work with $\bpi : \cX \mapsto [0, 1]^K$. This adjustment is minimal and can be seamlessly integrated into our framework.

Moreover, this formulation allows for the inclusion of additional linear constraints, depending on the application. For instance, a variant of the churn constraint takes the form:
\begin{align*}
\cC(\bpi) \eqdef \Prob^\bpi(f(\bX) \notin \Gamma^\bpi) \leq \beta,
\end{align*}
for a fixed classifier $f : \cX \mapsto [K]$ and a control parameter $\beta \in (0, 1)$. Again, we find that:
\begin{align*}
\cC(\bpi) = \Exp\left[(1 - \bpi_{f(\bX)}(\bX))\right] = \Exp\left[\sum_{y \in [K]}\ind{f(\bX) = y}(1 - \bpi_{y}(\bX))\right]\,,
\end{align*}
which is linear in $\bpi$ and, hence, can be included in the framework. We do not pursue the detailed theoretical exposition of this case, as the modifications to be made are rather straightforward.





\section{Conclusion}

We presented a unified post-processing framework for multi-class classification under general expectation-based constraints. Our method constructs randomized classifiers by solving a regularized dual optimization problem, allowing for explicit control of constraint satisfaction and statistical guarantees. It operates independently of the training process and can be applied to any base predictor, provided estimators of relevant quantities are available.
The proposed algorithm is theoretically grounded, with finite-sample guarantees that hold under minimal assumptions. It supports a wide range of applications, including fairness constraints, reject options, and churn minimization, and naturally extends to combinations thereof --- see Section~\ref{app:numeric} for some illustrative examples.
By decoupling constrained prediction from model training, our framework offers a flexible and efficient solution for integrating complex requirements into classification systems. Future work may explore extensions to structured prediction tasks and/or alternative regularization schemes.

\bibliography{bibli}
\bibliographystyle{plainnat}

\newpage
\appendix


\section{Proofs for results in Section~\ref{sec:methodology}}
\label{app:ProofSection3}
 Let us also define a vector $\bc(\cdot \mid \cdot) \eqdef (c_j(\cdot \mid \cdot))_{j \in [M]}$.  

First we explicit the first order optimality conditions for the problem in~\eqref{min-lse}.

\begin{lemma}
    \label{lem:KKT}
    Let $\blambda^\star \geq 0$ be any minimizer of~\eqref{min-lse} and $\pi^\star = \pi_{\blambda^\star}$ be defined in~\eqref{eq:optimal_entropic}. Then, there exist $\bgamma = (\gamma_{j})_{j \in [M]}$---element-wise non-negative vector such that 
\begin{align}
    \label{kkt}
    \begin{cases}
        &\Exp_\bX\left[\sum_{a \in \cA}c_j(\bX, a)\pi^\star(a \mid \bX)\right] = - \gamma_j \\
        & \gamma_{j}\lambda_{j}^{\star}=0 \\
    \end{cases} \qquad \forall j \in [M]\enspace.
\end{align}
\end{lemma}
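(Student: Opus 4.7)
The plan is to recognize the statement as the standard KKT conditions for the convex program $\min_{\blambda \geq 0} F(\blambda)$ at its minimizer $\blambda^\star$, combined with an explicit identification of the gradient of $F$ in terms of the kernel $\pi^\star$. Since the feasible region is the non-negative orthant, the first-order optimality condition reduces to: for every $j \in [M]$, $\partial_j F(\blambda^\star) \geq 0$, and $\lambda_j^\star \cdot \partial_j F(\blambda^\star) = 0$ (either $\lambda_j^\star = 0$ with a non-negative partial, or $\lambda_j^\star > 0$ with a vanishing partial). I would therefore define $\gamma_j \eqdef \partial_j F(\blambda^\star)$, which immediately provides non-negativity and the complementary slackness clause $\gamma_j \lambda_j^\star = 0$.

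The remaining task is to compute $\nabla F$ and match it with the first identity of~\eqref{kkt}. Recalling that $\nabla \lse(\bw) = \bsigma(\beta \bw)$ (from the log-sum-exp/softmax relationship used in Lemma~\ref{lemma:relaxation+smoothing}), applying the chain rule to the affine map $\blambda \mapsto -\bL(\bx) - \bC(\bx)^\top \blambda$, and interchanging gradient and expectation (justified by dominated convergence, using $\|\bsigma(\cdot)\|_1 = 1$ together with the integrability of $\bC$ assumed in Lemma~\ref{lem:NP}), one obtains
\[
\partial_j F(\blambda) = -\Exp_\bX\Bigl[\sum_{a \in \cA} c_j(\bX, a)\, \sigma_a\bigl(\beta(-\bL(\bX) - \bC(\bX)^\top \blambda)\bigr)\Bigr].
\]
Evaluating at $\blambda = \blambda^\star$ and recognising, by the very definition~\eqref{eq:optimal_entropic} of $\pi^\star = \pi_{\blambda^\star}$, that the softmax term equals $\pi^\star(a \mid \bX)$, gives
\[
\Exp_\bX\Bigl[\sum_{a \in \cA} c_j(\bX, a)\, \pi^\star(a \mid \bX)\Bigr] = -\partial_j F(\blambda^\star) = -\gamma_j,
\]
which is exactly the first clause of~\eqref{kkt}.

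I expect no serious obstacle: the argument is a textbook KKT derivation once the gradient formula is in hand. The only mildly delicate point is the differentiation under the integral sign, but since $\sigma_a \in [0,1]$ and $\Exp\|\bC(\bX)\|_F < \infty$, a dominating envelope of the form $|c_j(\bX, a)| \leq \|\bC(\bX)\|_F$ suffices to invoke dominated convergence uniformly in a neighbourhood of $\blambda^\star$. No convexity-at-the-boundary or non-smoothness issue arises because $F$ is everywhere differentiable and convex (as already observed in Lemma~\ref{lemma:L-bd} for the plug-in $\hat F$, with identical reasoning for $F$).
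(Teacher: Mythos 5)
Your proposal is correct and follows essentially the same route as the paper: invoke the first-order (KKT) optimality conditions for the smooth convex program over the non-negative orthant, set $\gamma_j = \partial_j F(\blambda^\star) \geq 0$ with $\gamma_j\lambda_j^\star = 0$, and identify $\nabla F$ via the softmax/$\lse$ gradient formula with $\pi^\star$. Your explicit justification of differentiation under the expectation is a welcome extra detail the paper leaves implicit.
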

\begin{proof}
We first observe that the optimization problem in~\eqref{min-lse} is convex and smooth.
Thus, Karush–Kuhn–Tucker conditions are sufficient for optimally. Furthermore, since Slater's condition is satisfied due to Assumption~\ref{ass:slater}, the latter is also necessary, as the strong duality holds.
In particular, there exist $\bgamma = (\gamma_{j})_{j \in [M]}$---element-wise non-negative matrices such that
\begin{align*}
\begin{cases}
    \nabla_{\blambda} F(\blambda^\star) - \bgamma = \mathbf{0}\\
    \blambda^\star \geq 0\\
    \gamma_{j}\lambda^\star_{j} = 0
\end{cases}
\qquad \forall j \in [M] \enspace.
\end{align*}
To conclude, it is sufficient to evaluate the gradient on $F$, whose expression is given in~\eqref{eq:true_grad} and use the definition of $\pi^\star$.
\end{proof}

\begin{lemma}
    \label{lemma:normF-bd}
    It holds that $\normin{\nabla \hat{F}(\blambda)} \leq \Exp_\bX \pqnormin{1}{2}{\hat{\bC}(\bX)}$.
\end{lemma}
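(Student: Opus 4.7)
The plan is to start from the closed-form expression of $\nabla \hat{F}(\blambda)$ given in equation~\eqref{eq:true_grad}, namely
\[
    \nabla\hat{F}(\blambda) = -\Exp_\bX\big[\hat{\bC}(\bX)\bsigma\big(\beta(-\hat{\bL}(\bX) - \hat{\bC}(\bX)^\top\blambda)\big)\big]\enspace,
\]
and bound the Euclidean norm by pushing the norm inside the expectation via Jensen's inequality. This reduces the task to bounding $\|\hat{\bC}(\bX)\bsigma(\cdot)\|_2$ pointwise in $\bX$.

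Next, I would use the fact that the output of the softmax is a probability vector, so $\|\bsigma(\beta(-\hat{\bL}(\bX) - \hat{\bC}(\bX)^\top\blambda))\|_1 = 1$. This is precisely where the subordinate norm $\pqnormin{1}{2}{\cdot}$ becomes the right quantity: by the very definition of the subordinate norm recalled in the notation paragraph,
\[
    \|\hat{\bC}(\bX)\bsigma(\cdot)\|_2 \leq \pqnormin{1}{2}{\hat{\bC}(\bX)} \cdot \|\bsigma(\cdot)\|_1 = \pqnormin{1}{2}{\hat{\bC}(\bX)}\enspace.
\]
Taking expectation yields the claimed bound $\|\nabla\hat{F}(\blambda)\| \leq \Exp_\bX \pqnormin{1}{2}{\hat{\bC}(\bX)}$.

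There is no real obstacle here: the proof is essentially a two-line computation combining Jensen's inequality, the $\ell_1$-normalization of the softmax, and the definition of the $(1\to 2)$ subordinate norm. The only subtlety worth flagging is that one must use precisely the $\pqnorm{1}{2}{\cdot}$ norm (rather than, say, the Frobenius norm), since it is the one matched to the $\ell_1$-size of $\bsigma$ and the Euclidean output norm; this mirrors the reasoning already used in Lemma~\ref{lemma:sigma-bd}.
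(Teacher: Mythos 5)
Your proof is correct and follows essentially the same route as the paper's: Jensen's inequality to move the norm inside the expectation, then the bound $\|\hat{\bC}(\bX)\bsigma(\cdot)\|_2 \leq \pqnormin{1}{2}{\hat{\bC}(\bX)}\,\|\bsigma(\cdot)\|_1 = \pqnormin{1}{2}{\hat{\bC}(\bX)}$ using that the softmax output is a probability vector. No gaps.
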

\begin{proof}
Applying Jensen's inequality we get 
\begin{align*}
    \normin{\nabla \hat{F} (\blambda)} = \normin{\Exp_\bX [g({\blambda};\bX)]} \leq \Exp_\bX\normin{g({\blambda};\bX)} \enspace.
\end{align*}
From the definition of $g(\cdot; \bX)$ given in \eqref{eq:stoch_grad} we have
\begin{align*}
    \Exp_\bX\norm{g({\blambda};\bX)}
    &= \Exp_\bX \normin{\hat{\bC}(\bX)\bsigma\big(\beta\big(
    -\hat{\bL}(\bX) - \hat{\bC}(\bX)^\top\blambda\big)\big)}
    \leq \Exp_\bX \pqnormin{1}{2}{\hat{\bC}(\bX)} \enspace,
\end{align*}
where the last inequality follows from the triangle inequality, the definition of $\pqnorm{1}{2}{\cdot}$ and the fact that $\norm{\bsigma(\cdot)}_1=1$.
\end{proof}

\section{Auxilliary results}
\label{app:aux}

In this appendix, we collect some standard auxiliary results, that are used to derive main claims of the paper.

\begin{lemma}[\citet{gao2017}]
\label{lemma:lse} 
Let $\ba=(a_1,\cdots,a_m)$ and $\beta>0$. Define log-sum-exp and softmax functions respectively as
\begin{align*}
    \lse(\boldsymbol{a})  \eqdef  \frac{1}{\beta}\log\left(\sum_{i = 1}^m \exp(\beta a_i)\right) \text{ and }
    \sigma_j(\beta \boldsymbol{a})  \eqdef  \frac{\exp(\beta a_j)}{\sum_{i = 1}^m \exp(\beta a_i)}\quad j \in [m] \enspace.
\end{align*}
The LSE property is as follows
\begin{align*}
    \max\{a_1,\cdots,a_m\} \leq \lse(\boldsymbol{a}) \leq \max\{a_1,\cdots,a_m\} + \frac{\log(m)}{\beta} \enspace.
\end{align*}
Moreover,
\begin{align*}
    \bsigma(\beta \ba) = \nabla\lse(\ba) \text{ is } \beta-\text{Lipschitz and  } \nabla^2 \lse(\ba) = \beta(\diag(\bsigma(\beta\ba))-\bsigma(\beta\ba)\bsigma(\beta\ba)^\top)\enspace.
\end{align*}
\end{lemma}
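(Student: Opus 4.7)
The plan is to establish the three claims sequentially, each reducing to a standard manipulation.

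For the bounds on $\lse$, I would sandwich the sum $\sum_{i=1}^m \exp(\beta a_i)$ between $\exp(\beta \max_i a_i)$ (taking one term) and $m \exp(\beta \max_i a_i)$ (bounding every term by the maximum). Taking logarithms and dividing by $\beta > 0$ gives both desired inequalities in one line each.

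For the gradient and Hessian, the approach is direct differentiation. Writing $Z(\ba) = \sum_i \exp(\beta a_i)$, I would compute
\begin{align*}
\partial_j \lse(\ba) = \frac{1}{\beta}\cdot\frac{\beta \exp(\beta a_j)}{Z(\ba)} = \sigma_j(\beta\ba),
\end{align*}
which yields $\nabla \lse(\ba) = \bsigma(\beta\ba)$. Differentiating once more using the quotient rule gives $\partial_i \sigma_j(\beta\ba) = \beta\,\sigma_j(\beta\ba)(\delta_{ij} - \sigma_i(\beta\ba))$, which in matrix form is exactly $\beta(\diag(\bsigma(\beta\ba)) - \bsigma(\beta\ba)\bsigma(\beta\ba)^\top)$.

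For the $\beta$-Lipschitzness of $\bsigma(\beta\,\cdot)$, I would bound the operator norm of the Hessian $\bH(\ba) \eqdef \nabla^2 \lse(\ba)$ by $\beta$. For any $\bv \in \bbR^m$, the key computation is
\begin{align*}
\bv^\top \bH(\ba) \bv = \beta\Big(\sum_{i=1}^m \sigma_i(\beta\ba) v_i^2 - \big(\textstyle\sum_{i=1}^m \sigma_i(\beta\ba) v_i\big)^2\Big),
\end{align*}
which is $\beta$ times the variance of $\bv$ under the probability distribution $\bsigma(\beta\ba)$. This quantity is nonnegative (so $\bH \succeq 0$, confirming convexity), and it is at most $\beta \sum_i \sigma_i(\beta\ba) v_i^2 \leq \beta \|\bv\|^2$ since $\bsigma(\beta\ba)$ is a probability vector. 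Hence all eigenvalues of $\bH(\ba)$ lie in $[0,\beta]$, so $\opnormin{\bH(\ba)} \leq \beta$. The Lipschitz bound $\norm{\bsigma(\beta\ba) - \bsigma(\beta\ba')} \leq \beta \norm{\ba - \ba'}$ then follows by the mean value theorem applied to $\ba \mapsto \nabla \lse(\ba)$.

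No step presents a genuine obstacle; the entire argument is elementary calculus plus the observation that the Hessian is a variance operator. The only mildly delicate point is recognizing that $\sum_i \sigma_i v_i^2 \leq \|\bv\|^2$ because $\bsigma \in \Delta([m])$, which gives the tight Lipschitz constant $\beta$.
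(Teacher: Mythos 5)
Your proof is correct in every step. The paper itself does not prove this lemma at all—it is stated as an auxiliary result imported from \citet{gao2017}—so there is no internal argument to compare against; your write-up simply supplies the standard derivation that the citation stands in for. The sandwich bound via $e^{\beta \max_i a_i} \leq \sum_i e^{\beta a_i} \leq m\, e^{\beta \max_i a_i}$, the direct computation of the gradient and Hessian, and the identification of $\bv^\top \nabla^2 \lse(\ba)\, \bv$ as $\beta$ times a variance under the probability vector $\bsigma(\beta\ba)$ (hence bounded by $\beta \|\bv\|^2$, giving $\beta$-Lipschitzness of the gradient by the fundamental theorem of calculus along the segment from $\ba'$ to $\ba$) are all exactly the standard route and are watertight. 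The only cosmetic remark is that "mean value theorem" for a vector-valued map should be read as the integral form of Taylor's theorem, but the conclusion is unaffected.
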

\begin{lemma}[\citet{boyd2004convex}]
    \label{lemma:lse_variational}
  It holds that
    \begin{align*}
        \lse(\bw) = \max_{\bp \in \Delta}\left\{\scalar{\bw}{\bp} - \frac{1}{\beta} \Psi(\bp)\right\}\,,
    \end{align*}
    where $\Delta$ is the probability simplex in $\bbR^m$ and $\Psi(\bp) = \sum_{i = 1}^m p_i \log(p_i)$. Furthermore, $-\Psi(\bp) \in [0, \log(m)]$ and the optimum in the above optimization problem is achieved at $\bp^{\star} = \sigma(\beta \bw)$.
\end{lemma}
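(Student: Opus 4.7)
The plan is to recognize this as the classical Fenchel duality relation between the log-sum-exp function and the negative entropy restricted to the simplex, and prove it directly via Lagrangian analysis of the concave maximization problem on the right-hand side.

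First I would handle the entropy bounds as a warm-up. Upper boundedness of $-\Psi$ by $\log(m)$ follows from Jensen's inequality applied to the concave function $t \mapsto -t\log t$ (or equivalently, from the fact that uniform distribution maximizes entropy on a finite alphabet, which one can prove in one line by the KL-divergence inequality $\mathrm{KL}(\bp \,\|\, \bu) \geq 0$ with $\bu$ uniform). Non-negativity of $-\Psi$ follows from $-p_i \log p_i \geq 0$ for $p_i \in [0,1]$, with the convention $0\log 0 = 0$.

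For the main identity, I would set up the optimization problem
\[
\max_{\bp \in \Delta}\left\{\scalar{\bw}{\bp} - \frac{1}{\beta}\sum_{i=1}^m p_i \log p_i\right\}
\]
and observe that the objective is strictly concave (strict concavity comes from the negative entropy term) and continuous on the compact simplex, so a unique maximizer exists. Since the objective tends to $-\infty$ as any $p_i \to 0^+$ faster than the linear term grows --- wait, actually $-p_i \log p_i \to 0$ as $p_i \to 0^+$, so the boundary is not automatically excluded by the objective. However, the gradient of $-p_i\log p_i/\beta$ with respect to $p_i$ is $-(1+\log p_i)/\beta$, which diverges to $+\infty$ as $p_i \to 0^+$, so small perturbations into the interior strictly improve the objective. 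Hence the maximizer lies in the relative interior of $\Delta$, and only the equality constraint $\sum_i p_i = 1$ is active.

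Next I would form the Lagrangian $\mathcal{L}(\bp, \nu) = \scalar{\bw}{\bp} - \beta^{-1}\sum_i p_i \log p_i - \nu(\sum_i p_i - 1)$ and solve $\partial \mathcal{L}/\partial p_i = 0$, yielding
\[
w_i - \beta^{-1}(1 + \log p_i) - \nu = 0 \quad \Longleftrightarrow \quad p_i = \exp(\beta w_i - 1 - \beta\nu).
\]
Normalization $\sum_i p_i = 1$ fixes the multiplier and gives $p_i^\star = \exp(\beta w_i)/\sum_j \exp(\beta w_j) = \sigma_i(\beta \bw)$. Finally, plugging $\bp^\star$ back into the objective, the entropy term simplifies since $\log p_i^\star = \beta w_i - \log\sum_j \exp(\beta w_j)$, so
\[
\scalar{\bw}{\bp^\star} - \beta^{-1}\sum_i p_i^\star \log p_i^\star = \beta^{-1}\log\sum_j \exp(\beta w_j) = \lse(\bw),
\]
which is the desired identity.

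There is no real obstacle here --- the argument is completely standard. The only mild care needed is the boundary analysis ensuring the KKT conditions are necessary (handled via the gradient blow-up at the boundary, or equivalently by noting the problem is a special case of Fenchel conjugation of a Legendre-type function). Alternatively, one could skip the Lagrangian calculation entirely and invoke a standard result on the conjugate of the negative entropy.
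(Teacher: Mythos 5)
Your proof is correct and complete. The paper does not prove this lemma at all---it is stated as a citation to \citet{boyd2004convex}, where the identity appears as the standard computation of the conjugate of the negative entropy (equivalently, of the log-sum-exp function)---and your Lagrangian derivation, including the careful observation that the derivative blow-up of $-p_i\log p_i$ at the boundary forces the maximizer into the relative interior of $\Delta$ so that only the normalization constraint is active, is precisely that textbook argument. Nothing is missing.
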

\begin{lemma}[\citet{lewis2023}]
    \label{lemma:norm-12} 
    Let $\bA\in\bbR^{n\times m}$ be a matrix with columns $\ba_i \in \bbR^n$, for $i=1,\cdots,m$. Then, $\pqnorm{1}{2}{\bA}=\max_i \norm{\ba_i}_2$.
\end{lemma}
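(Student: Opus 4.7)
The plan is to prove Lemma~\ref{lemma:norm-12} directly from the definition of the subordinate norm $\pqnorm{1}{2}{\bA} = \sup\{\norm{\bA\bw}_2 : \norm{\bw}_1 = 1,\ \bw \in \bbR^m\}$, by establishing the two matching inequalities.

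First I would observe that, since $\bA\bw = \sum_{i=1}^m w_i \ba_i$, the triangle inequality for the Euclidean norm gives
\begin{align*}
    \norm{\bA\bw}_2 = \Bigl\|\sum_{i=1}^m w_i \ba_i\Bigr\|_2 \leq \sum_{i=1}^m |w_i|\,\norm{\ba_i}_2 \leq \Bigl(\max_{i \in [m]} \norm{\ba_i}_2\Bigr) \cdot \norm{\bw}_1\,.
\end{align*}
Taking the supremum over $\bw$ with $\norm{\bw}_1 = 1$ yields the upper bound $\pqnorm{1}{2}{\bA} \leq \max_i \norm{\ba_i}_2$.

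For the matching lower bound, I would exhibit a feasible vector achieving the maximum. Let $i^\star \in \argmax_{i \in [m]} \norm{\ba_i}_2$ and take $\bw = \be_{i^\star}$, the $i^\star$-th standard basis vector of $\bbR^m$. Then $\norm{\bw}_1 = 1$ and $\bA\bw = \ba_{i^\star}$, so $\norm{\bA\bw}_2 = \norm{\ba_{i^\star}}_2 = \max_i \norm{\ba_i}_2$, which shows $\pqnorm{1}{2}{\bA} \geq \max_i \norm{\ba_i}_2$.

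There is no real obstacle here: the statement is a standard fact about operator norms between $\ell_1$ and $\ell_2$, and its proof is one application of the triangle inequality together with the observation that the unit ball of $\ell_1$ is the convex hull of the signed standard basis vectors, so the supremum of a convex function (like $\bw \mapsto \norm{\bA\bw}_2$) over this ball is attained at a vertex. Combining the two inequalities concludes the proof.
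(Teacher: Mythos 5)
Your proof is correct: the upper bound via the triangle inequality and the lower bound via the standard basis vector $\be_{i^\star}$ together give the identity, and both steps are airtight. The paper itself does not prove this lemma --- it is stated as a cited fact from \citet{lewis2023} --- but your argument is the canonical one and there is nothing to add.
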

\begin{lemma}
    \label{lemma:normHessian}
    Let $\bA\in\bbR^{n\times m}$ be a matrix with columns $\ba_i \in \bbR^n$, for $i=1,\cdots,m$, and $\by \in \bbR^m$ be a vector \st $\sum_{i=1}^m y_i = 1$ and $y_i\geq0$ for $i=1,\cdots,n$. Then
    \begin{align*}
        \opnorm{\bA(\diag(\by)-\by\by^\top)\bA^\top} \leq 2 \pqnorm{1}{2}{\bA}^2\enspace.
    \end{align*}
\end{lemma}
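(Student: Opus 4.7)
The plan is to decompose the matrix $\diag(\by) - \by\by^\top$ and handle each piece separately, invoking the triangle inequality for the operator norm and the identification $\pqnorm{1}{2}{\bA} = \max_i \|\ba_i\|_2$ from Lemma~\ref{lemma:norm-12} at the end.

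First I would write
\begin{align*}
    \opnorm{\bA(\diag(\by) - \by\by^\top)\bA^\top}
    \leq \opnorm{\bA\,\diag(\by)\,\bA^\top} + \opnorm{\bA\by\by^\top\bA^\top}\enspace,
\end{align*}
and treat the two summands independently. For the first, I would expand $\bA\,\diag(\by)\,\bA^\top = \sum_{i=1}^m y_i\, \ba_i \ba_i^\top$, which is a convex combination (thanks to $y_i \geq 0$ and $\sum_i y_i = 1$) of rank-one positive semidefinite operators. Since $\opnorm{\ba_i \ba_i^\top} = \|\ba_i\|_2^2$, convexity of the operator norm on the PSD cone (or simply a direct quadratic-form computation against a unit vector $\bu$, giving $\sum_i y_i (\ba_i^\top \bu)^2 \leq \max_i \|\ba_i\|_2^2$) yields $\opnorm{\bA\,\diag(\by)\,\bA^\top} \leq \max_i \|\ba_i\|_2^2$.

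For the second summand, I would use the rank-one identification $\bA\by\by^\top \bA^\top = (\bA\by)(\bA\by)^\top$, whose operator norm equals $\|\bA\by\|_2^2$. Then the triangle inequality combined with $\sum_i y_i = 1$ gives $\|\bA\by\|_2 \leq \sum_i y_i \|\ba_i\|_2 \leq \max_i \|\ba_i\|_2$, so $\opnorm{\bA\by\by^\top\bA^\top} \leq \max_i \|\ba_i\|_2^2$.

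Summing the two estimates produces the factor-two bound $2 \max_i \|\ba_i\|_2^2$, and a single invocation of Lemma~\ref{lemma:norm-12} to rewrite $\max_i \|\ba_i\|_2 = \pqnorm{1}{2}{\bA}$ closes the proof. There is no real obstacle here; the only point that requires a moment of care is keeping track of which matrix acts where—in particular, recognizing that $\by\by^\top$ after conjugation by $\bA$ collapses to the rank-one outer product of $\bA\by$ with itself, which is what makes the second bound immediate.
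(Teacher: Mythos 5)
Your proposal is correct and follows essentially the same route as the paper's proof: the same triangle-inequality split into $\opnorm{\bA\,\diag(\by)\,\bA^\top}$ and $\opnorm{\bA\by\by^\top\bA^\top}$, the same convex-combination bound $\sum_i y_i \|\ba_i\|_2^2 \leq \max_i \|\ba_i\|_2^2$ for the first term, and the same rank-one identification $\|\bA\by\|_2^2 \leq \pqnorm{1}{2}{\bA}^2$ for the second. No gaps.
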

\begin{proof}
Triangle inequality yields
\begin{align*}
    \opnorm{\bA(\diag(\by)-\by\by^\top)\bA^\top} \leq \opnorm{\bA\diag(\by)\bA^\top}+\opnorm{\bA\by\by^\top\bA^\top} \enspace.
\end{align*}
It remains to bound each term of the right side of the above inequality. For the first term, we have that $\bA\diag(\by)\bA^\top = \sum_{i=1}^m  y_i \ba_i \ba_i^\top$, so
\begin{align*}
    \opnorm{\bA\diag(\by)\bA^\top} &= \opnorm{\sum_{i=1}^m  y_i \ba_i \ba_i^\top} \leq \sum_{i=1}^m y_i \opnorm{\ba_i \ba_i^\top} = \sum_{i=1}^m y_i \norm{\ba_i}_2^2 \\ 
    &\leq \max_{i = 1, \ldots, m} \norm{\ba_i}_2^2 = \pqnorm{1}{2}{\bA}^2\enspace,
\end{align*}
where the first inequality follows from the fact that $\norm{\by}_1 =1$ and the last one follows from Lemma~\ref{lemma:norm-12}.
For the second term, we have
\begin{align*}
    \opnorm{\bA\by\by^\top\bA^\top} = \norm{\bA\by}^2_2 \leq \pqnorm{1}{2}{\bA
    }^2\enspace,
\end{align*}
where the last inequality follows from the fact that $\norm{\by}_1 =1$ and the definition of $\pqnorm{1}{2}{\cdot}$. The proof is concluded.
\end{proof}

\section{Numerical illustrations}
\label{app:numeric}

In this appendix, we provide some basic illustrations of our procedure. Our methodology can handle a wide range of losses and constraints and the goal here is to highlight its flexibility through two examples: fairness and reject option frameworks~\footnote{The code is available at \url{https://github.com/taturyan/post-processing}.}.
In particular, we implemented our \texttt{COPT} Algorithm~\ref{alg:main} with the correct form of the loss and the constraint.

\subsection{Data} 
We conducted our experiments on 2 datasets:
\begin{enumerate}
    \item \textit{Communities and Crime} dataset (\citet{misc_communities_and_crime_183}) contains socio-economic, law enforcement, and crime data about communities in the US with 1994 examples. We focus on predicting the number of violent crimes per $10^5$ population within the range of $[0,1]$. In case of fairness, we consider ethnicity as protected variable.
    \item \textit{Adult} dataset~(\citet{lichman2013}) consists in $48842$ instances of socio-economic data. The goal is to predict whether the annual income of an individual exceeds 50K/year based on census data. We clean and preprocess it, and use a smaller sub-sample of 2000 points with 8 features throughout our experiments.
When considering the fairness problem, we consider the sex as a sensitive attribute, distinguishing between male and female individuals. 
\end{enumerate}

\subsection{Experiments}
The general pipeline of the algorithm is the same for all constraints, with slight difference in the case of fairness constraints, where sensitive attributes have to be taken into account. 
\subsubsection{Controlled rejection.}
We randomly split the data into training, unlabeled and testing  sets with proportions of $0.4\times0.4\times0.2$. We use $\data_{\trainlabeled} = \{(\bx_i ,y_i)_{i = 1}^n\}$ to train a base regressor to estimate $p_y$ for all $y\in \mathcal{A} = [K]$. We use simple \textit{LogisticRegression} from \textit{scikit-learn} for training the base classifier. Finally, we use the trained classifier to train the Algorithm~\ref{alg:main} with $\data_{\trainunlab} = (\bx)_{i = n+1}^{n+T}$ for $N$ (note that our theory suggests that $N = T$ is enough, but we have noticed that larger $N$ can be more beneficial in practice) iterations. 
We use $\data_{\test} = \{(\bx'_i, y_i')_{i = 1}^m\}$ to collect evaluation statistics. 
We take the sets of thresholds $\mathcal{I}=\{0.2, 0.1, 0.05, 0.025, 0.0125\}$ for the rejection control. We set  $\beta=0.5\sqrt{T}\log{\sqrt{T}}$ and repeat the pipeline 10 times.

\paragraph{Evaluation measures.} For reject option, we use $\data_{\test} $ to collect the following statistics of any (randomized) prediction $\pi$:
\begin{equation*}
    \hat{\risk}(\pi) \eqdef \frac{1}{m}\sum_{i = 1}^m \sum_{\hat{y}} \ind{\hat{y} \neq y'} \pi(\hat y \mid \bx_i'), \qquad\qquad 
    \hat{C}(\pi) \eqdef \frac{1}{m}\sum_{i = 1}^m \pi (r \mid \bx_i') \enspace,
\end{equation*} 
which correspond to the empirical risk and the control on the rejection respectively.

\paragraph{Results.} In Figure~\ref{fig:RO} we illustrate the comparison of our model with the model of \citet{denis2020consistency} -- denoted by \texttt{BinClassRO} -- on \textit{Communities and Crime} and \textit{Adult} datasets. We consider 3 variants of our procedure, one with $T=100$, the second with $T=800$, and an alternative approach where $T=100$ and we output from the probabilistic prediction the argmax of the probabilities as prediction. (This is a more conventional deterministic classifier for which our theory does not hold.) 

Notably, our methods perform quite similarly to the baseline method even though our method is not tailored only to the specific task of rejection option. We also observe that increase $T=800$ has only a small positive impact in this experiments.

\begin{figure}[!htb]
\minipage{0.48\textwidth}
\centering
\includegraphics[width=\linewidth]{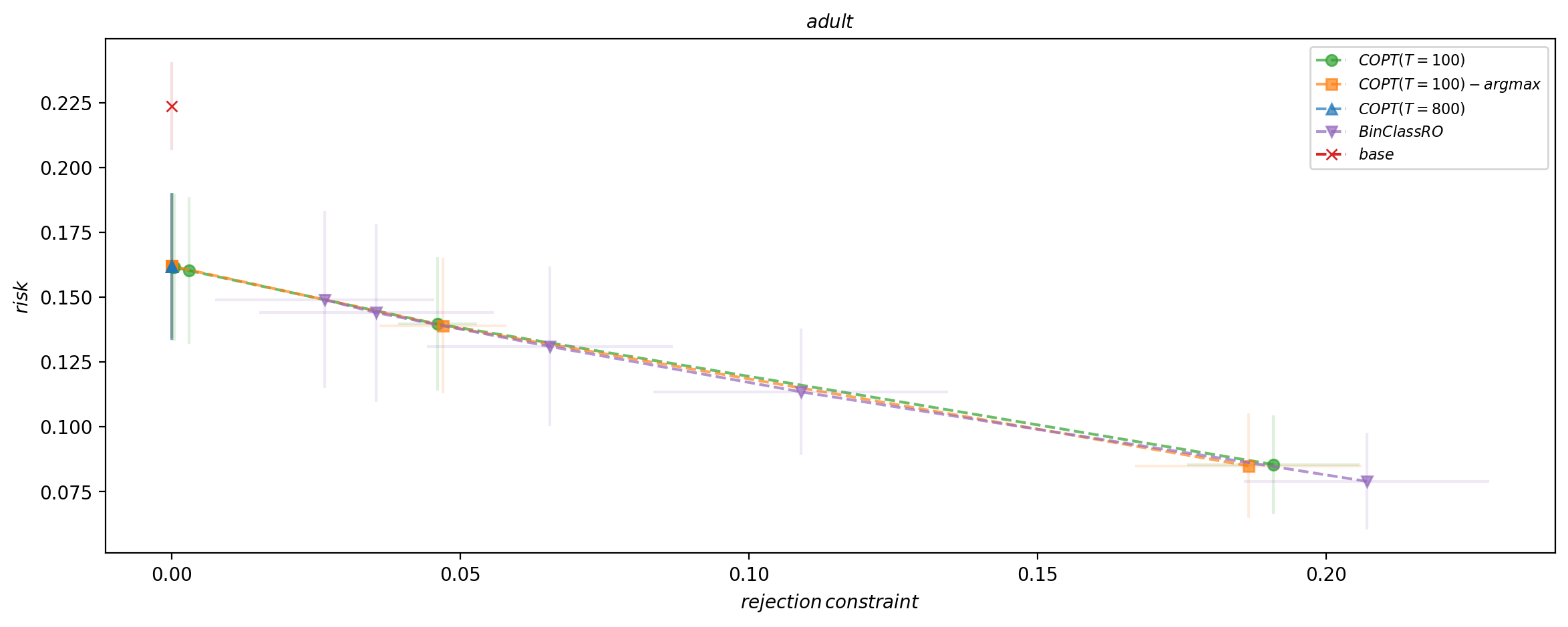}
\endminipage 
\minipage{0.48\textwidth}
\centering
\includegraphics[width=\linewidth]{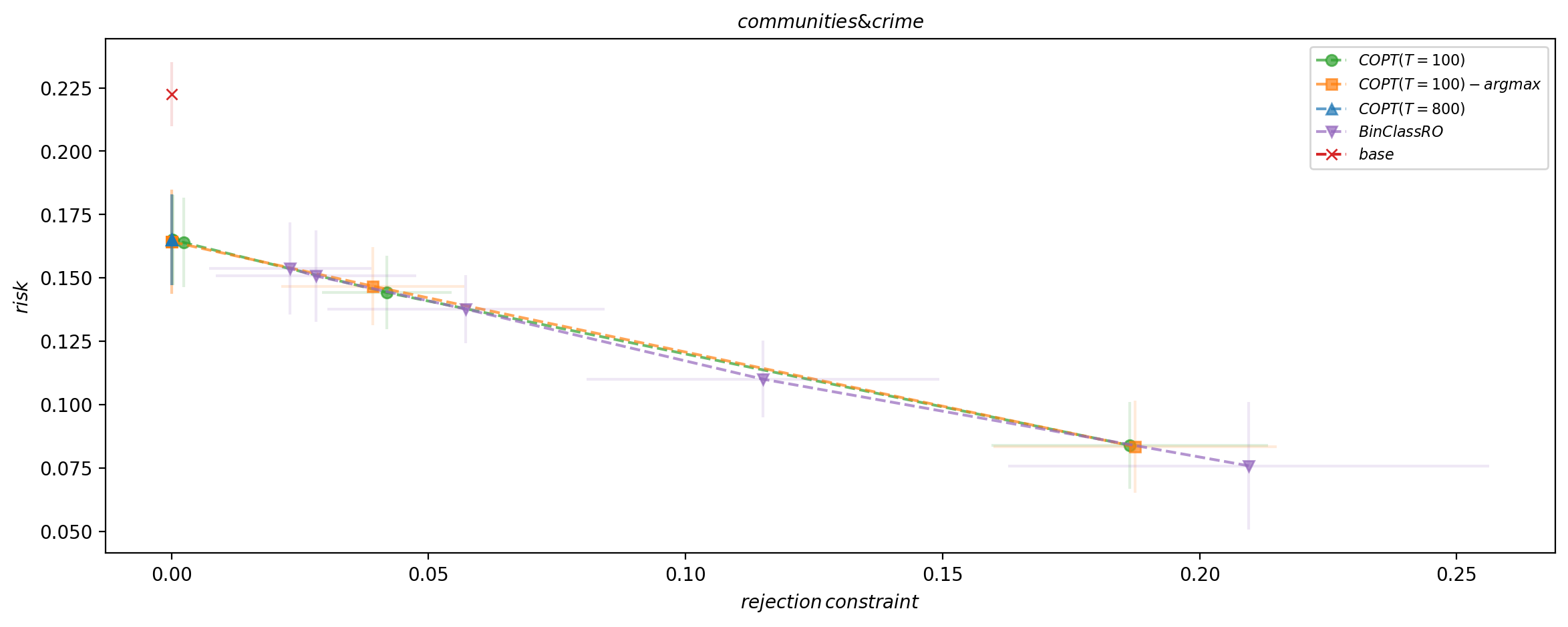}
\endminipage
\caption{Control of the rejection w.r.t. risk}
\label{fig:RO}
\end{figure}

\subsubsection{Unaware Demographic Parity.}
We randomly split the data into training, unlabeled and testing  sets with proportions of $0.4\times0.4\times0.2$. We use $\data_{\trainlabeled} = \{(\bx_i ,y_i)_{i = 1}^n\}$ to train a base classifier to estimate $p_y$ for all $y\in \mathcal{A} = [K]$ and $\{(\bx_i ,s_i)_{i = 1}^n\}$ to train a classifier to estimate $\tau$. We use simple \textit{LogisticRegression}s from \textit{scikit-learn} for training both classifiers. Finally, we use the trained classifiers to train the Algorithm~\ref{alg:main} with $\data_{\trainunlab} = (\bx)_{i = n+1}^{n+T}$ for $N$ (note that our theory suggests that $N = T$ is enough, but we have noticed that larger $N$ can be more beneficial in practice) iterations. 
We use $\data_{\test} = \{(\bx'_i, s'_i, y_i')_{i = 1}^m\}$ to collect evaluation statistics. 
We take the sets of fairness violation thresholds $\{(2^{-i},2^{-i})_{i \in \mathcal{I}}\}$, where $\mathcal{I}=\{1, 2, 4, 5, 6, 8, 16, 32, 128\}$ for \textit{Adult} and $\mathcal{I}=\{1.3, 2, 4, 8, 16, 32, 512\}$ for \textit{Communities and Crime}. We set  $\beta=0.5\sqrt{T}\log{\sqrt{T}}$ and repeat the pipeline 10 times.

\subsection{Evaluation measures.} We use $\data_{\test} $ to collect the following statistics of any (randomized) prediction $\pi$ 
\begin{align*}
    &\hat{\risk}(\pi) \eqdef \frac{1}{m}\sum_{i = 1}^m \sum_{\hat{y}} \ind{\hat{y} \neq y'} \pi(\hat y \mid \bx_i')\enspace,\\
    &\hat{U}_s(\pi) \eqdef \left|\frac{1}{m_s}\sum_{i = 1}^m \sum_{\hat{y}} \pi(\hat y \mid \bx_i')\ind{s_i' = s} - \frac{1}{m} \sum_{\hat{y}} \sum_{i = 1}^m \pi(\hat y \mid \bx_i')\right|\enspace,
\end{align*} 
which correspond to the empirical risk and the empirical group-wise unfairness quantified by the Kolmogorov-Smirnov distance of a randomized prediction.

\paragraph{Results} In Figure~\ref{fig:fair} we illustrate a comparison of our model with \textit{fairlearn} python package on \textit{Communities and Crime} and \textit{Adult} datasets. We considered two versions of this algorithm: \texttt{fairlearn - 1}where we use to whole dataset $\data_{\trainlabeled}$ and $\data_{\trainunlab}$ for the estimation of the $p_y$ and \texttt{fairlearn - 2} where we only use the sample $\data_{\trainlabeled}$ for that purpose.

Here again, our proposal adapts very well to the problem even though it is not only devoted to the fairness problem with demographic parity (DP) constraint.

\begin{figure}[!htb]
\minipage{0.48\textwidth}
\centering
\includegraphics[width=\linewidth]{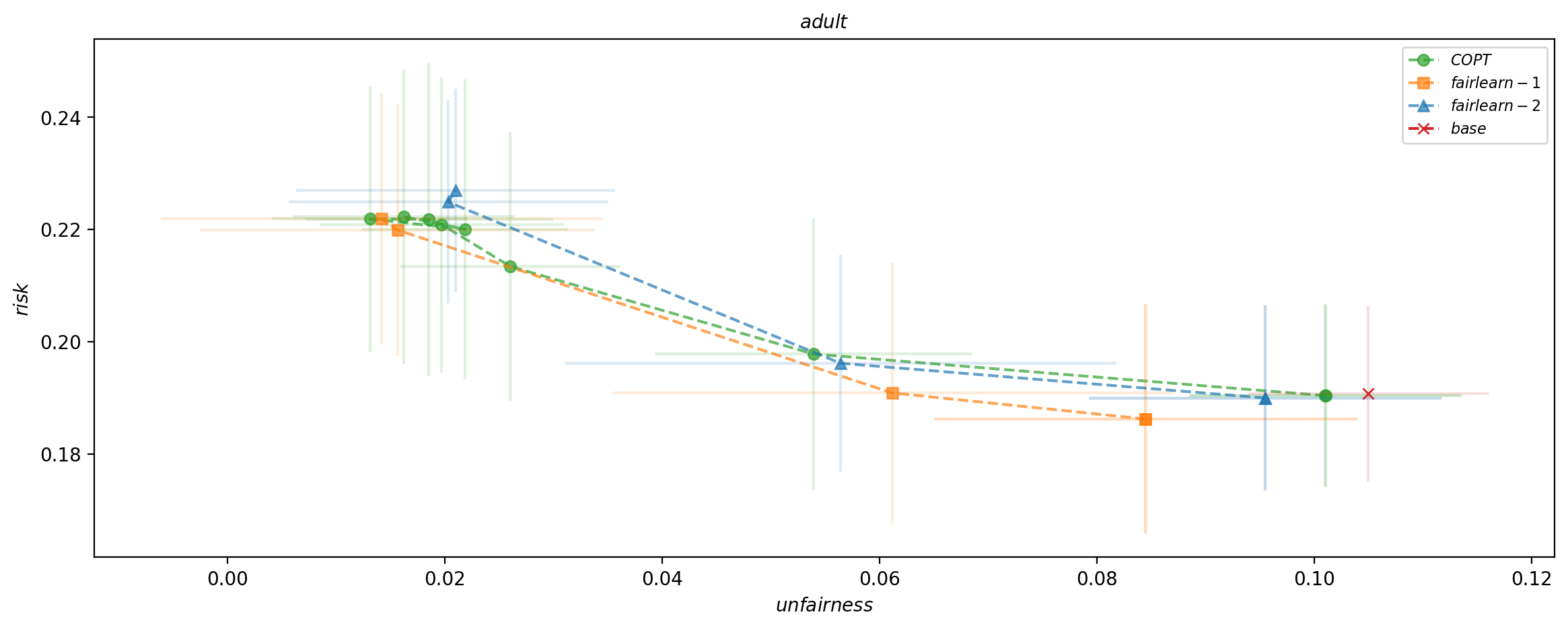}
\endminipage 
\minipage{0.48\textwidth}
\centering
\includegraphics[width=\linewidth]{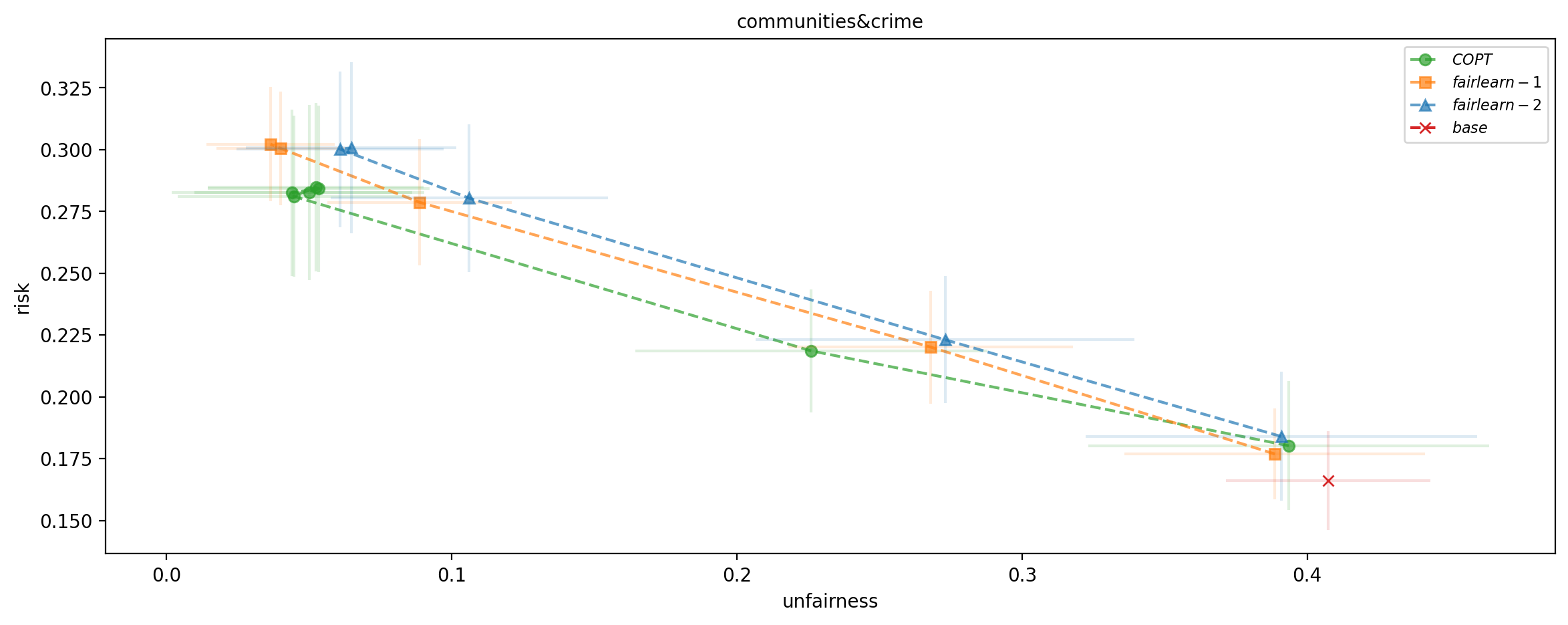}
\endminipage
\caption{Violation of DP fairness constraint w.r.t. risk}
\label{fig:fair}
\end{figure}


\end{document}